\documentclass[11pt, a4paper]{article}

\usepackage[a4paper, margin=2.5cm]{geometry}
\usepackage[utf8]{inputenc}
\usepackage[T1]{fontenc}
\usepackage{lmodern}                
\usepackage{amsmath, amssymb, amsthm}
\usepackage{booktabs}
\usepackage{enumitem}
\usepackage{hyperref}
\usepackage{microtype}
\usepackage{mathtools}
\usepackage{amsthm}

\usepackage{amsthm}

\hypersetup{
	colorlinks=true,
	linkcolor=blue,
	citecolor=blue,
	urlcolor=blue,
	pdftitle={Regularization of a Divergent Arithmetic Product via the Asymptotic Monoid},
	pdfauthor={A. Alvarez Cruz and E. A. Alvarez Gutierrez}
}

\newcommand{\Si}{\Sigma}
\newcommand{\cE}{\mathcal{E}}

\newcommand{\cF}{\mathcal{F}}

\newcommand{\CC}{\mathbb{C}}
\newcommand{\RR}{\mathbb{R}}
\newcommand{\NN}{\mathbb{N}}

\newcommand{\DD}{\mathbb{D}}
\newcommand{\lcp}{\operatorname{lcp}}
\newcommand{\lcs}{\operatorname{lcs}}

\theoremstyle{definition}
\newtheorem{definition}{Definition}[section]
\newtheorem{theorem}{Theorem}[section]
\newtheorem{lemma}[theorem]{Lemma}
\newtheorem{proposition}[theorem]{Proposition}
\newtheorem{corollary}[theorem]{Corollary}
\newtheorem{remark}[theorem]{Remark}

\title{\textbf{Regularization of a Divergent Arithmetic Product\\
		via the Asymptotic Monoid:\\
		The Case of \((-1)^{\omega(n)}\)}}

\author{
	A.~\'Alvarez Cruz\thanks{Instituto de Computa\c{c}\~ao,
		Universidade Federal do Rio de Janeiro (UFRJ), Rio de Janeiro,
		Brazil. Email: \texttt{amaury@ic.ufrj.br}}
	\and
	E.~A.~\'Alvarez Guti\'errez\thanks{Col\'egio Pedro II, Campus Centro,
		Rio de Janeiro, Brazil.
		Email: \texttt{esteban.gutierrez.1@cp2.edu.br}}
}

\date{}

\begin{document}
	
	\maketitle
	
\begin{abstract}
	We apply the generalized monoid of words and logarithmic-prefix
	functionals of \cite{C1} to the regularization of the divergent product
	\[
	P_{\omega}=\prod_{n=1}^{\infty} n^{(-1)^{\omega(n)}},
	\]
	where \(\omega(n)\) is the number of distinct prime divisors of \(n\).
	The associated Dirichlet series
	\(D_{\omega}(s)=\sum_{n\ge1}(-1)^{\omega(n)}n^{-s}\) has a natural
	boundary at \(\Re(s)=0\), as established by Meretov (1987) and recovered
	here as a special case of the Estermann--Dahlquist theory of Euler
	products, which makes zeta regularization impossible: the derivative
	\(D_{\omega}'(0)\) does not exist.  This product therefore lies completely
	beyond the reach of classical regularization methods.  We construct the
	canonical net encoding the sign sequence, define the logarithmic-prefix
	functional with exponent \(\alpha=2\), and extract a finite value through
	the weighted evaluation functional of the monoid framework.  The
	evaluation functional is shown to converge absolutely, and this
	convergence is unconditional: it follows from the classical zero-free
	region of \(\zeta(s)\) (de la Vall\'ee Poussin), which yields the
	effective bound
	\(T_{\omega}(x)=O(x\,e^{-c\sqrt{\log x}})\) for some \(c>0\).  Numerical
	computation up to \(N=10^7\) gives the regularized value
	\[
	P_{\omega,\mathrm{ren}}\approx 0.7731,
	\]
	a new constant of arithmetic origin that cannot be obtained by any
	classical summation method.  We also compute the second derivative of
	the Dirichlet series at \(s=1\),
	\[
	D_{\omega}''(1)=8\,C_2\ln 2,
	\]
	where \(C_2\) is the Hardy--Littlewood twin prime constant, which provides
	an unconditional analytical link to the arithmetic
	structure of the sequence \((-1)^{\omega(n)}\).
	A self-contained proof of the natural boundary, based on the
	Estermann--Dahlquist theorem, is given in Appendix~A.
\end{abstract}
	
\section{Introduction}
\label{sec:intro}

The regularization of divergent products of arithmetic origin is a classical
problem with deep connections to analytic number theory, zeta regularization,
and the theory of Dirichlet series.  In a companion paper \cite{C1,C2}, a
generalized monoid of words \(\widetilde{\Si}^*\) was constructed as an
asymptotic quotient of moderate nets of finite words, and applied to the
regularization of several divergent arithmetic products.  The products
treated there---alternating integers, alternating factorials, dyadic block
products, and the Thue--Morse product---span a range of divergence behaviours
from bounded oscillation to logarithmic growth.  In every case where a
classical regularization exists, the monoid construction reproduces the
known value; in cases inaccessible to classical methods, the monoid provides
a canonical finite value confirmed by numerical computation.

The present paper applies the same framework to a product whose divergence
structure is fundamentally different: the product associated with the
function \((-1)^{\omega(n)}\), where \(\omega(n)\) is the number of distinct
prime divisors of \(n\).  This sequence is multiplicative but \emph{not}
completely multiplicative, a distinction that is essential for the analytic
properties of its Dirichlet series.  The key structural property, established
by Meretov \cite{Meretov1987} and recovered here as a special case of the
Estermann--Dahlquist theory of Euler products
\cite{Estermann1928,Dahlquist1952}, is that this Dirichlet series has a
\emph{natural boundary} at \(\Re(s)=0\).  It admits a meromorphic
continuation to the half-plane \(\Re(s)>0\), whose only possible singularity
in this half-plane is a removable point at \(s=1\), and the line
\(\Re(s)=0\) cannot be crossed by any analytic continuation.

This obstruction has a direct consequence for the regularization of the
product
\begin{equation}\label{eq:product_intro}
	P_{\omega}=\prod_{n=1}^{\infty} n^{(-1)^{\omega(n)}}.
\end{equation}
The logarithm of the product is formally
\[
\log P_{\omega}=\sum_{n=1}^{\infty}(-1)^{\omega(n)}\log n
=
-D_{\omega}'(0),
\]
where \(D_{\omega}(s)=\sum (-1)^{\omega(n)}n^{-s}\).  Since \(D_{\omega}(s)\)
has a natural boundary at \(\Re(s)=0\), the derivative \(D_{\omega}'(0)\)
does not exist and zeta regularization fails completely.  The same
obstruction applies to all methods that depend on meromorphic continuation
(Abel summation, Ramanujan summation, heat-kernel regularization).

The generalized monoid framework, being purely asymptotic and algebraic, is
insensitive to this obstruction.  It operates directly on the prefix profile
of the sign sequence and extracts a finite value through a weighted
evaluation functional.  The purpose of this paper is to carry out this
procedure in detail, to present the numerical evidence for the resulting
regularized value, and to give a self-contained proof of the natural boundary
via the Estermann--Dahlquist theorem.

Beyond the computation of the regularized value
\(P_{\omega,\mathrm{ren}}\approx 0.7731\), we establish two further results
that ground the construction in classical analytic number theory.  First, we
compute the second derivative of the Dirichlet series at \(s=1\):
\[
D_{\omega}''(1)=8\,C_2\ln 2,
\]
where \(C_2=\prod_{p\ge 3}p(p-2)/(p-1)^2\) is the Hardy--Littlewood twin
prime constant.  This calculation, which corrects a preliminary erroneous
value of \(2C_2\ln 2\), shows that the arithmetic structure of
\((-1)^{\omega(n)}\) is deeply intertwined with the distribution of prime
pairs.  Second, we analyse the convergence of the evaluation functional: it
converges absolutely, and this convergence is unconditional, following from
the classical zero-free region of \(\zeta(s)\) (de la Vall\'ee Poussin)
rather than from any unproven hypothesis.  In particular, the regularized
value is stable under reordering and truncation, which justifies its
canonical interpretation within the monoid framework.

We may summarise the situation as follows.  Classical regularization
methods fail for \(P_{\omega}\) because they require meromorphic
continuation to \(s=0\), which the natural boundary forbids.  The
monoid framework provides a canonical alternative: it selects a
particular absolutely convergent series, motivated by the divergence
type of the sequence, and assigns to it the regularized value
\(P_{\omega,\mathrm{ren}}\approx 0.7731\).  This value is a new
arithmetic invariant of the sequence \((-1)^{\omega(n)}\), accompanied
analytically by the companion identity \(D_{\omega}''(1)=8C_{2}\ln 2\).
The framework does not break the natural boundary, nor does it compete with
classical summation methods on their own terrain; it operates in a
different regime, where the question is not whether \(D_{\omega}(0)\)
can be defined, but which canonical finite invariant can be associated
with the sequence by algebraic and asymptotic means.

The remainder of the paper makes this programme precise.  Section~\ref{sec:framework}
recalls the algebraic framework of \cite{C1}.  Section~\ref{sec:product}
introduces the product \(P_{\omega}\) and establishes the natural
boundary.  Section~\ref{sec:encoding} encodes the sign sequence in the
monoid and constructs the evaluation functional.  Section~\ref{sec:analytic_s1}
establishes the analytical structure at \(s=1\), including the
computation of \(D_{\omega}''(1)\).  Section~\ref{sec:numerical}
presents the numerical results.  Section~\ref{sec:discussion} discusses
the results and outlines directions for future work.  Appendix~\ref{app:natural_boundary}
contains a self-contained proof of the natural boundary.
	
	
\section{Scope and strategy of the regularization}
\label{sec:scope}

Before introducing the technical machinery, we clarify what the
monoid regularization achieves for the product
\(P_{\omega}=\prod n^{(-1)^{\omega(n)}}\), and what it does not.  This
section is intended to prevent two common misinterpretations: that the
monoid provides a more powerful summation method than the classical
ones, and that it circumvents the natural boundary of \(D_{\omega}(s)\)
by analytic means.

\subsection*{The obstruction and its nature}

The Dirichlet series
\[
D_{\omega}(s)=\sum_{n=1}^{\infty}\frac{(-1)^{\omega(n)}}{n^{s}}
\]
has a natural boundary at \(\Re(s)=0\)
(Theorem~\ref{thm:meretov}).  Consequently, the derivative
\(D_{\omega}'(0)\), which would give the logarithm of the product
under zeta regularization, does not exist.  The same obstruction
applies to Abel summation, Ramanujan summation, heat-kernel
regularization, and any method that requires meromorphic continuation
of \(D_{\omega}(s)\) to \(s=0\).

It is important to be precise about what this means.  The natural
boundary is an obstruction to \emph{analytic continuation}.  It is not
an obstruction to the existence of the sequence
\(a(n)=(-1)^{\omega(n)}\), nor to the existence of a canonical finite
invariant associated with it.  The task of the monoid framework is to
extract such an invariant by purely algebraic and asymptotic means,
without invoking analytic continuation.

\subsection*{What the monoid does, operationally}

The monoid framework does not continue \(D_{\omega}(s)\).  Instead, it
encodes the sign sequence as an asymptotic word, and prescribes a
particular convergent series to sum.  Specifically, with
\[
\Lambda_k=\sum_{n=1}^{k}(-1)^{\omega(n)}\log n,
\]
the framework produces the functional
\[
\cF=\sum_{k=2}^{\infty}\frac{\Lambda_k}{k^{2}},
\]
whose exponential is declared to be the regularized value:
\[
P_{\omega,\mathrm{ren}}=\exp(\cF)\approx 0.7731.
\]
The exponent \(\alpha=2\) is not arbitrary: it follows from the
heuristic \(\alpha=\sigma_{1}+1\) of \cite{C1}, where \(\sigma_{1}=1\)
is the abscissa of absolute convergence of \(D_{\omega}(s)\).

\subsection*{What \(\cF\) really is}

It is instructive to rewrite \(\cF\) in a more transparent form.
Exchanging the order of summation and using the trigamma function
\(\psi^{(1)}(n)=\sum_{k=n}^{\infty}k^{-2}\), we obtain
\[
\cF
=
\sum_{n=1}^{\infty}(-1)^{\omega(n)}\log n\,\psi^{(1)}(n).
\]
Since \(\psi^{(1)}(n)=1/n+O(n^{-2})\), this decomposes as
\begin{equation}\label{eq:F_transparent}
	\cF
	=
	\underbrace{\sum_{n=1}^{\infty}\frac{(-1)^{\omega(n)}\log n}{n}}_{=-D_{\omega}'(1)=0}
	+
	\underbrace{\sum_{n=1}^{\infty}(-1)^{\omega(n)}\log n
		\left(\psi^{(1)}(n)-\frac{1}{n}\right)}_{\text{absolutely convergent}}.
\end{equation}
The first summand vanishes because \(D_{\omega}'(1)=0\)
(Lemma~\ref{lem:D_zero_order}).  The second summand is absolutely
convergent, since the bracket is \(O(n^{-2})\) and
\(|\log n|\le n^{\varepsilon}\) for every \(\varepsilon>0\).

Thus \(\cF\) is, after this rearrangement, a series of elementary terms
that converges absolutely and unconditionally.  Its numerical value is
computable by standard methods.  This is the precise operational
content of the regularization: \emph{the monoid prescribes which
	absolutely convergent series to sum}, not a new summation principle.

\subsection*{What the monoid contributes}

The contribution of the monoid framework is therefore threefold, and it
is important to state it without overstatement.

\begin{enumerate}[label=(\roman*)]
	\item \textbf{A canonical prescription.}  The framework fixes the
	exponent \(\alpha=2\) via the heuristic \(\alpha=\sigma_{1}+1\),
	and hence fixes the series \(\sum \Lambda_k/k^{2}\) among the
	many possible convergent rearrangements of the divergent data.
	Without this prescription, one could equally well consider
	\(\sum \Lambda_k/k^{1.5}\), \(\sum \Lambda_k/k^{3}\), or any
	other weight, and obtain different numerical values.  The monoid
	selects one of them canonically.
	\item \textbf{Canonicity of the value.}  By the invariance lemma
	for logarithmic-prefix functionals
	(Lemma~\ref{lem:invariance}), the value \(\cF\) depends only on
	the class \(\mathbf{w}_{\omega}\in\widetilde{\Si}^{*}\) of the
	sign sequence, not on the particular net used to represent it.
	In the present case, where the sequence is deterministic, this
	canonicity is almost immediate; but the framework makes it
	explicit and places it in a general algebraic setting.
	\item \textbf{A companion analytical identity.}  The regularized
	value is not an arbitrary numerical output: it is accompanied by
	a non-trivial identity for the local analytic structure of
	\(D_{\omega}(s)\) at \(s=1\), namely
	\(D_{\omega}''(1)=8C_{2}\ln 2\)
	(Proposition~\ref{prop:D_second_derivative}).  The appearance of
	the Hardy--Littlewood twin prime constant \(C_{2}\) shows that
	the same arithmetic structure governs both the regularized value
	and the local curvature of \(D_{\omega}(s)\).  No quantitative
	identity linking the two constants is claimed.
\end{enumerate}

\subsection*{What the monoid does not do}

For the sake of clarity, we also state what the framework does
\emph{not} achieve in the present case.

\begin{enumerate}[label=(\roman*)]
	\item \textbf{It does not provide a summation method stronger than
		the classical ones.}  For the product \(P_{\omega}\), the
	regularization reduces, via \eqref{eq:F_transparent}, to the
	sum of an absolutely convergent series of elementary terms.  Any
	sufficiently careful analyst, given the idea of using
	\(\psi^{(1)}(n)\) as a mollifier, could have produced the same
	value by classical means.  The novelty lies in the
	\emph{justification} of the choice of mollifier, not in its
	computational power.
	\item \textbf{It does not cross the natural boundary.}  The line
	\(\Re(s)=0\) remains a natural boundary for \(D_{\omega}(s)\).
	The monoid does not continue \(D_{\omega}(s)\) across it, nor
	does it evaluate \(D_{\omega}(s)\) at \(s=0\).  It simply works
	with a different object, the asymptotic prefix profile of the
	sign sequence, which is well defined regardless of the analytic
	behaviour of \(D_{\omega}(s)\).

\end{enumerate}

\section{Summary of the algebraic framework}
\label{sec:framework}

We recall the essential definitions from \cite{C1,C2}.  The framework is
purely asymptotic and algebraic: it operates on finite words whose lengths
grow moderately as a parameter \(\varepsilon\) tends to \(0\), and
identifies two such families when their prefixes and suffixes agree on
logarithmic scales.

\begin{definition}[Moderate net]
	\label{def:moderate_net}
	A net \((w_\varepsilon)_{\varepsilon\in(0,1]}\) of finite words over
	a finite alphabet \(\Si\) is \emph{moderate} if there exist
	\(N\in\NN\), \(C>0\), and \(\varepsilon_0>0\) such that
	\[
	|w_\varepsilon|\le C\varepsilon^{-N}
	\qquad(\varepsilon<\varepsilon_0).
	\]
	The set of moderate nets is denoted by \(\cE_M(\Si^*)\).
\end{definition}

\begin{definition}[Asymptotic equivalence]
	\label{def:log_equiv}
	Two moderate nets \((u_\varepsilon)\) and \((v_\varepsilon)\) are
	\emph{logarithmically equivalent}, written
	\((u_\varepsilon)\sim(v_\varepsilon)\), if for every
	\(m\in\NN\) there exists \(\varepsilon_0>0\) such that
	\[
	\min\{\lcp(u_\varepsilon,v_\varepsilon),
	\lcs(u_\varepsilon,v_\varepsilon)\}
	\ge
	m\log_2(1/\varepsilon)
	\qquad(\varepsilon<\varepsilon_0),
	\]
	where \(\lcp\) and \(\lcs\) denote the longest common prefix and
	suffix, respectively.
\end{definition}

The relation \(\sim\) is an equivalence relation on \(\cE_M(\Si^*)\),
compatible with concatenation.  The quotient
\[
\widetilde{\Si}^*=\cE_M(\Si^*)/\!\sim
\]
is a monoid under concatenation, extending the free monoid \(\Si^*\).
Elements of \(\widetilde{\Si}^*\) are called \emph{asymptotic words}.

\begin{remark}[Intuition]
	Two moderate nets are logarithmically equivalent when they agree on
	their first and last \(m\log_2(1/\varepsilon)\) letters, for every
	fixed \(m\), as \(\varepsilon\to0^+\).  This is a strong condition:
	it forces the two nets to have the same asymptotic prefix and suffix
	profile on logarithmic scales, while allowing arbitrary behaviour in
	the middle.  The quotient \(\widetilde{\Si}^*\) therefore retains
	exactly the information that is visible to logarithmic-prefix
	functionals.
\end{remark}

\begin{definition}[Logarithmic-prefix functional]
	\label{def:log_prefix_functional}
	A \emph{logarithmic-prefix functional} is a family of maps
	\(F_\varepsilon:\Si^*\to\CC\) such that:
	\begin{enumerate}[label=(\roman*)]
		\item \emph{Prefix dependence:} for each \(\varepsilon\), the value
		\(F_\varepsilon(w)\) depends only on the prefix of \(w\) of
		length \(N_0(\varepsilon)+1\), where
		\[
		N_0(\varepsilon)=\lfloor\log_2(1/\varepsilon)\rfloor.
		\]
		\item \emph{Moderate growth:} there exist \(C>0\) and \(M\in\NN\)
		such that
		\[
		|F_\varepsilon(w)|\le C\,(1+|w|)^M
		\qquad(\varepsilon\in(0,1],\;w\in\Si^*).
		\]
	\end{enumerate}
\end{definition}

The following invariance lemma, proved in \cite[Lemma~2.7]{C1}, is the
technical heart of the framework.  It guarantees that logarithmic-prefix
functionals descend to the quotient \(\widetilde{\Si}^*\).

\begin{lemma}[Invariance]
	\label{lem:invariance}
	Let \(F_\varepsilon\) be a logarithmic-prefix functional.  If
	\((u_\varepsilon)\) and \((v_\varepsilon)\) are moderate nets with
	\((u_\varepsilon)\sim(v_\varepsilon)\), then
	\[
	F_\varepsilon(u_\varepsilon)=F_\varepsilon(v_\varepsilon)
	\qquad\text{eventually.}
	\]
	Consequently, \(F_\varepsilon\) induces a well-defined map
	\[
	\widetilde{F}:\widetilde{\Si}^*\longrightarrow\widetilde{\CC}
	\]
	on the asymptotic monoid.
\end{lemma}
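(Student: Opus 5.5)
The argument compares the prefixes of length \(N_0(\varepsilon)+1\) of \(u_\varepsilon\) and \(v_\varepsilon\), and is a direct consequence of Definitions~\ref{def:log_equiv} and~\ref{def:log_prefix_functional}. Fix the two equivalent moderate nets \((u_\varepsilon)\sim(v_\varepsilon)\). Apply Definition~\ref{def:log_equiv} with \(m=2\): there is \(\varepsilon_0>0\) such that
\[
\lcp(u_\varepsilon,v_\varepsilon)\ \ge\ 2\log_2(1/\varepsilon)\qquad(\varepsilon<\varepsilon_0).
\]
Shrinking \(\varepsilon_0\) so that \(\log_2(1/\varepsilon)\ge 1\), we get \(2\log_2(1/\varepsilon)\ge \lfloor\log_2(1/\varepsilon)\rfloor+1=N_0(\varepsilon)+1\). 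Hence the common prefix of \(u_\varepsilon\) and \(v_\varepsilon\) has length at least \(N_0(\varepsilon)+1\), and the two words agree on their first \(N_0(\varepsilon)+1\) letters.

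The one point needing care is the case where a word is shorter than \(N_0(\varepsilon)+1\). I would read \(\lcp(u,v)\) as bounded by \(\min(|u|,|v|)\), and interpret ``the prefix of length \(N_0+1\)'' as the whole word when the word is shorter. Suppose \(\lcp(u_\varepsilon,v_\varepsilon)\ge N_0(\varepsilon)+1\) while one of the words is shorter than \(N_0(\varepsilon)+1\). Then \(\lcp\) would exceed that word's length, which is impossible. So in fact both words have length at least \(N_0(\varepsilon)+1\), and the truncated prefixes are well defined and equal. By the prefix-dependence axiom (i), \(F_\varepsilon(u_\varepsilon)=F_\varepsilon(v_\varepsilon)\) for all \(\varepsilon<\varepsilon_0\), which is the ``eventually'' claim. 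The suffix half of the equivalence is not needed.

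For the descent to \(\widetilde{\Si}^*\), I would take \(\widetilde{\CC}\) to be the ring of moderate nets of complex numbers modulo negligible nets (nets that are \(O(\varepsilon^q)\) for every \(q\)), as in \cite{C1}. There are two steps.

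\textbf{Moderateness.} Axiom (ii) and \(|w_\varepsilon|\le C\varepsilon^{-N}\) give
\[
|F_\varepsilon(w_\varepsilon)|\le C'(1+C\varepsilon^{-N})^M=O(\varepsilon^{-NM}),
\]
so \((F_\varepsilon(w_\varepsilon))_\varepsilon\) is a moderate net and defines an element of \(\widetilde{\CC}\).

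\textbf{Independence of representative.} By the first part, two equivalent representatives give nets that coincide for \(\varepsilon<\varepsilon_0\). Their difference vanishes near \(0\), so it is negligible. Hence \(\widetilde F[(w_\varepsilon)]:=[(F_\varepsilon(w_\varepsilon))]\) is well defined on the quotient.

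No real obstacle is expected. The only subtle point is the convention that \(\lcp\) is capped by word length, which the counting argument above handles.
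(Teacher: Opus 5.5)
Your proof is correct and takes the same route as the paper. The paper defers the full argument to the companion paper but sketches it in the descent paragraph of Section~\ref{sec:encoding}: logarithmic equivalence forces the prefixes of length \(N_0(\varepsilon)+1\) to coincide eventually, and prefix dependence then yields \(F_\varepsilon(u_\varepsilon)=F_\varepsilon(v_\varepsilon)\). Your choice \(m=2\) to secure the extra letter beyond \(\lfloor\log_2(1/\varepsilon)\rfloor\), the length-capping observation, and the moderateness check on the image net are sound details that the paper leaves implicit.
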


\begin{remark}
	The image \(\widetilde{\CC}\) denotes the ring of asymptotic complex
	numbers, that is, equivalence classes of nets of complex numbers
	modulo the relation of eventual agreement on logarithmic scales.  In
	practice, when the limit
	\[
	\widetilde{F}(\mathbf{w})=\lim_{\varepsilon\to0^+}F_\varepsilon(w_\varepsilon)
	\]
	exists in \(\CC\) for some (hence every) representative
	\((w_\varepsilon)\) of \(\mathbf{w}\in\widetilde{\Si}^*\), we identify
	\(\widetilde{F}(\mathbf{w})\) with its limit.
\end{remark}

\begin{definition}[Weighted evaluation functional]
	\label{def:eval_functional}
	Let \(P=\prod_{n\ge1}q_n^{a(n)}\) be a divergent product with sign
	sequence \((a(n))\subset\CC\), and let
	\[
	\Lambda_k=\sum_{n=1}^{k}a(n)\log q_n
	\]
	denote its prefix functional.  The \emph{weighted evaluation
		functional} associated with \(P\) is the logarithmic-prefix
	functional
	\begin{equation}\label{eq:eval_general}
		\cF_\varepsilon(u)
		=
		\sum_{k=2}^{N_0(\varepsilon)}
		\frac{\Lambda(u[1\ldots k])}{k^{\alpha}},
		\qquad
		u\in\Si^*,
	\end{equation}
	where \(N_0(\varepsilon)=\lfloor\log_2(1/\varepsilon)\rfloor\) and
	the exponent \(\alpha>0\) is chosen according to the heuristic
	\(\alpha=\sigma_1+1\), with \(\sigma_1\) the abscissa of absolute
	convergence of the associated Dirichlet series.
\end{definition}

\begin{remark}[Choice of the exponent \(\alpha\)]
	\label{rem:choice_alpha}
	The heuristic \(\alpha=\sigma_1+1\) ensures that the summands
	\(\Lambda_k/k^{\alpha}\) decay fast enough for \(\cF_\varepsilon\) to
	converge as \(\varepsilon\to0^+\), while remaining faithful to the
	divergence type of the product.  In the case of a product associated
	with a bounded sign sequence, one has \(\sigma_1=1\), hence
	\(\alpha=2\).  This is precisely the value used in
	Section~\ref{sec:encoding} for the product \(P_\omega\).
\end{remark}

\begin{definition}[Regularized value]
	\label{def:regularized_value}
	Let \(P=\prod_{n\ge1}q_n^{a(n)}\) be a divergent product, and let
	\(\cF_\varepsilon\) be the weighted evaluation functional
	\eqref{eq:eval_general}.  Suppose that for some (hence every)
	canonical net \((w_\varepsilon)\) representing the sign sequence of
	\(P\) in \(\widetilde{\Si}^*\), the limit
	\[
	\cF=\lim_{\varepsilon\to0^+}\cF_\varepsilon(w_\varepsilon)
	\]
	exists in \(\CC\).  The \emph{regularized value} of \(P\) is then
	defined as
	\[
	P_{\mathrm{ren}}=\exp(\cF).
	\]
\end{definition}

By Lemma~\ref{lem:invariance}, the regularized value \(P_{\mathrm{ren}}\)
depends only on the class \(\mathbf{w}\in\widetilde{\Si}^*\) of the sign
sequence, not on the particular net \((w_\varepsilon)\) chosen to
represent it.  This is the precise sense in which the monoid framework
provides a \emph{canonical} regularization: it assigns to each asymptotic
word a finite value that is invariant under the logarithmic equivalence
of Definition~\ref{def:log_equiv}.

\begin{remark}[Scope of the framework]
	The construction above is purely asymptotic and algebraic.  It does
	not require the associated Dirichlet series to admit meromorphic
	continuation, nor does it invoke analytic continuation of any kind.
	This is the structural reason why the framework is applicable to
	products whose Dirichlet series have natural boundaries, such as the
	product \(P_\omega\) studied in this paper.
\end{remark}
	
\section{The product \(P_\omega\) and its Dirichlet series}
\label{sec:product}

\subsection{Definition and basic properties}

Let \(\omega(n)\) denote the number of distinct prime divisors of \(n\),
with the convention \(\omega(1)=0\).  The function
\[
\sigma(n)=(-1)^{\omega(n)}
\]
is \emph{multiplicative}: for coprime \(m,n\ge1\),
\[
\omega(mn)=\omega(m)+\omega(n),
\qquad\text{hence}\qquad
\sigma(mn)=\sigma(m)\sigma(n).
\]
It is \emph{not} completely multiplicative: for a prime \(p\),
\(\omega(p^2)=1\), so \(\sigma(p^2)=-1\), whereas
\(\sigma(p)^2=1\).  This distinction is essential for the analytic
properties of the associated Dirichlet series.

The first values of \(\sigma(n)\) are
\[
\sigma(1),\ldots,\sigma(16)
=
+1,-1,-1,-1,-1,+1,-1,-1,-1,+1,-1,+1,-1,+1,+1,-1.
\]

Consider the formal product
\begin{equation}\label{eq:Pomega}
	P_{\omega}=\prod_{n=1}^{\infty}n^{(-1)^{\omega(n)}}.
\end{equation}
The log-partial sums are
\[
S_{\omega}(N)=\sum_{n=1}^{N}(-1)^{\omega(n)}\log n.
\]
By Abel summation,
\begin{equation}\label{eq:abel}
	S_{\omega}(N)
	=
	T_{\omega}(N+1)\log N
	-
	\sum_{n=1}^{N-1}T_{\omega}(n+1)
	\log\!\Bigl(1+\frac{1}{n}\Bigr),
\end{equation}
where
\[
T_{\omega}(N)=\sum_{n\le N}(-1)^{\omega(n)}
\]
is the summatory function of the sign sequence.

\subsection{The Euler product}

For \(\Re(s)>1\), the multiplicativity of \((-1)^{\omega(n)}\) gives the
Euler product
\begin{equation}\label{eq:euler_correct}
	D_{\omega}(s)
	=
	\sum_{n=1}^{\infty}\frac{(-1)^{\omega(n)}}{n^{s}}
	=
	\prod_{p}\frac{1-2p^{-s}}{1-p^{-s}}.
\end{equation}
This is \emph{not} equal to \(\zeta(s)/\zeta(2s)\): the latter is the
Dirichlet series of the square-free indicator \(\mu^{2}(n)\), namely
\[
\frac{\zeta(s)}{\zeta(2s)}
=
\prod_{p}(1+p^{-s}).
\]
The distinction is essential: the local factor of \(D_\omega\) is
\((1-2p^{-s})/(1-p^{-s})\), which has a zero at \(p^{-s}=1/2\) and a
pole at \(p^{-s}=1\), whereas the local factor of
\(\zeta(s)/\zeta(2s)\) is \(1+p^{-s}\), which is entire in \(p^{-s}\).

\subsection{The natural boundary obstruction}

The following result is due to Meretov \cite{Meretov1987}; it is also
a special case of the Estermann--Dahlquist theory of Euler products
\cite{Estermann1928,Dahlquist1952}.  A self-contained proof is given
in Appendix~\ref{app:natural_boundary}.

\begin{theorem}[Meretov, 1987; Estermann--Dahlquist]
	\label{thm:meretov}
	The function \(D_{\omega}(s)\) admits a meromorphic continuation
	to the half-plane \(\Re(s)>0\).  Its only possible singularity in
	this half-plane is at \(s=1\), where the simple pole of the factor
	\(\zeta(s)\) is cancelled by the simple zero of the local factor
	\(1-2^{1-s}\); hence \(s=1\) is a removable singularity of
	\(D_{\omega}\), and \(D_{\omega}\) is in fact analytic in
	\(\Re(s)>0\) after removal of this point.  The line
	\(\Re(s)=0\) is a \emph{natural boundary}: no analytic
	continuation across any arc of this line is possible.
\end{theorem}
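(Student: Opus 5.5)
The plan is to factor the Euler product \eqref{eq:euler_correct} into powers of \(\zeta(ks)\), in the manner of Estermann and Dahlquist. Put \(x=p^{-s}\) and write the local factor as \(h(x)=(1-2x)/(1-x)\). For \(|x|<1/2\) we have
\[
\log h(x)=\sum_{m\ge1}\frac{1-2^{m}}{m}\,x^{m}.
\]
We then seek integers \(e_k\) with \(h(x)=\prod_{k\ge1}(1-x^{k})^{-e_k}\) as formal power series. Comparing coefficients gives \(\sum_{k\mid m}k\,e_k=1-2^{m}\), so by M\"obius inversion
\[
e_k=\frac1k\sum_{d\mid k}\mu(k/d)\,(1-2^{d}).
\]
This is an integer: up to sign it is the number of primitive necklaces of length \(k\) over two letters, with a correction at \(k=1\). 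Explicitly \(e_1=-1\), and \(|e_k|\sim 2^{k}/k\) as \(k\to\infty\); in particular \(e_k\ne0\) for every \(k\).

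The continuation comes from truncating this factorization. For each \(K\) write
\[
D_\omega(s)=\prod_{k<K}\zeta(ks)^{e_k}\,H_K(s),\qquad H_K(s)=\prod_p h(p^{-s})\prod_{k<K}(1-p^{-ks})^{e_k}.
\]
The local factor of \(H_K\) is \(1+O(|p^{-s}|^{K})\), uniformly for \(|p^{-s}|\le 1/2-\delta\), with constants depending on \(K\). Hence \(H_K\) is analytic and zero-free in \(\Re(s)>1/K\), except for the finitely many primes \(p\) for which \(|p^{-s}|\) can approach \(1/2\). Those finitely many local factors are explicit rational functions of \(p^{-s}\) and are handled by hand. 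Letting \(K\to\infty\) gives meromorphic continuation to \(\Re(s)>0\).

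The singularities must then be located. On \(\Re(s)>1/2\) the factorization reads \(D_\omega=\zeta(s)^{-1}\cdot(\text{analytic})\). So at \(s=1\) the pole of \(\zeta\) is cancelled; in the language of the statement this is the cancellation against the zero of the \(p=2\) local factor \(1-2^{1-s}\). The point \(s=1\) is therefore removable. The remaining candidate singularities are at \(s=1/k\) and at \(s=\rho/k\), where \(\rho\) runs over nontrivial zeros of \(\zeta\), together with zeros of \(h(p^{-s})\). I would check these against the statement's assertion that \(s=1\) is the only possible singularity. I expect this part of the claim to need care: the points \(\rho/k\) with \(e_k>0\) look like genuine poles, so the statement may only hold in the weaker sense that \(D_\omega\) is meromorphic on \(\Re(s)>0\).

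For the natural boundary I would follow Estermann's argument. Fix a point \(it_0\) on the imaginary axis and a disc \(B\) centred there. Zeros of \(\zeta\) of height \(\asymp kt_0\) exist in abundance, since \(N(T)\sim (T/2\pi)\log T\). Scaling such zeros by \(1/k\) gives candidate singularities \(\rho/k\) of \(\zeta(ks)^{e_k}\), and their real parts \(\beta/k\) tend to \(0\). In \(\Re(s)>1/K\) the only factors that can be singular there are the explicit \(\zeta(ks)^{e_k}\) with \(k<K\), so the order of \(D_\omega\) at a point \(s_0\) is \(\sum_{k<K}e_k\operatorname{ord}_{ks_0}\zeta\). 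If \(D_\omega\) continued analytically across \(B\), only finitely many zeros or poles could lie in any compact subset of \(B\). This contradicts the accumulation of singularities at \(it_0\).

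The main obstacle is showing that these singularities do not cancel. In principle \(\rho/k=\rho'/k'\) could hold, with \(e_k\operatorname{ord}\rho+e_{k'}\operatorname{ord}\rho'=0\). I would first try a counting argument. The number of zeros \(\rho/k\) falling in a small box near \(it_0\) grows with \(k\). Coincidences forced between different \(k\) correspond to zeros at rationally related heights \(\gamma k'/k\). Using only \(N(T)\) and zero-density estimates for \(\beta>1/2+\eta\), one should show that for large \(k\) most \(\rho/k\) in the box, for example those with \(\rho\) simple and of maximal \(k\)-level, cannot be cancelled. Since \(e_k\ne0\) and \(\zeta(ks)^{e_k}\) is the unique factor of its level, a point in \(\{\rho/k\}\setminus\bigcup_{k'\ne k}\{\rho'/k'\}\) is a genuine zero or pole of \(D_\omega\). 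If the counting gets too delicate, the fallback is to invoke Dahlquist's theorem directly. That theorem covers local factors \(h\) analytic in \(|x|<1\) apart from isolated singularities and not a finite product of cyclotomic factors. Our \(h\) is clearly not such a product, since infinitely many \(e_k\) are nonzero, and Dahlquist's conclusion is exactly that \(\Re(s)=0\) is a natural boundary.
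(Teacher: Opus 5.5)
\textbf{The statement is partly false.} Your doubt about the middle assertion is justified, and you should state it as a conclusion. The claim that $s=1$ is the only possible singularity, and that $D_\omega$ is analytic on $\Re(s)>0$, is false, so no argument can prove the statement as written.

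First fix a sign slip. For $k\ge2$,
\[
e_k=-\frac1k\sum_{d\mid k}\mu(k/d)\,2^d ,
\]
which is minus the number of binary Lyndon words of length $k$. So $e_k\le-1$ for every $k$ ($e_1=e_2=-1$, $e_3=-2$, $e_4=-3,\dots$), i.e. $D_\omega=\zeta(s)^{-1}\zeta(2s)^{-1}\zeta(3s)^{-2}\cdots$. It is the negative exponents that turn zeros of $\zeta(ks)$ into poles, and this happens at every level $k$.

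\textbf{A concrete pole.} Take $\rho_1=\tfrac12+i\gamma_1$ with $\gamma_1\approx14.1347$, and take $K\ge3$.
\begin{enumerate}[label=(\alph*)]
\item $\zeta(2\rho_1)\ne0$, since $\Re(2\rho_1)=1$.
\item $\zeta(k\rho_1)\ne0$ for $k\ge3$, since $\Re(k\rho_1)>1$.
\item $H_K(\rho_1)\ne0$. The local factors of $H_K$ are analytic on $|x|<1$, so your uniform bound holds on $|x|\le r<1$ and no primes need separate treatment. Their only zeros are at $p^{-s}=1/2$, i.e. on $\Re(s)=\log2/\log p$, and this never equals $\tfrac12$ for prime $p$.
\end{enumerate}
Hence $D_\omega$ has a pole at $\rho_1$. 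Likewise it has a pole at every zero of $\zeta$ on the critical line, of order equal to that zero's multiplicity.

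At $s=1$ the picture is also not a pole cancelled by a zero. There is a double zero: one order from $\zeta(s)^{-1}$ and one from $1-2^{1-s}$, consistent with Lemma~\ref{lem:D_zero_order}.

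The paper reaches analyticity through its version of Theorem~\ref{thm:estermann_dahlquist}(i), which places all poles of the continuation at poles of the local factors $h(p^{-s})$. That overlooks exactly the factors $\zeta(ks)^{e_k}$ you wrote down. The paper's own proof of Lemma~\ref{lem:D_zero_order} even displays the factor $1/\zeta(s)$.

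\textbf{The true parts.} Meromorphic continuation to $\Re(s)>0$ and the natural boundary do hold. Your truncated factorization is the right route (Estermann's), and the obstacle you anticipate disappears once the signs are correct. For non-real $s_0$ with $\Re(s_0)>1/K$,
\[
\operatorname{ord}_{s_0}D_\omega=\operatorname{ord}_{s_0}H_K-\sum_{k<K}|e_k|\,\operatorname{ord}_{ks_0}\zeta .
\]
Here $\operatorname{ord}_{s_0}H_K\in\{0,1\}$: the zeros $p^{-s}=1/2$ are simple, and distinct primes give distinct real parts. Every $\operatorname{ord}_{ks_0}\zeta$ is $\ge0$, since $ks_0\ne1$.

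So the $\zeta$-contributions can never cancel one another. Since $|e_k|\ge2$ for $k\ge3$, every point $\rho/k$ with $k\ge3$ is a genuine pole. Ordinates of zeros of $\zeta$ have bounded gaps, so for each $t_0$ one finds $\rho$ with $|\rho/k-it_0|=O(1/k)$. Thus poles accumulate at every point of the imaginary axis. No counting, zero-density estimate or appeal to Dahlquist is needed, and this even rules out meromorphic continuation across any arc.

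The paper argues differently, through accumulation of the local-factor zeros $s_{p,k}$ and the identity theorem. That mechanism also works, but it asserts without proof that the $s_{p,k}$ are zeros of $D_\omega$. A priori they could coincide with points $\rho/k$ and be cancelled by the corresponding poles. Ruling this out needs the same sign-based cancellation analysis as above.
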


\begin{corollary}\label{cor:no_zeta}
	The derivative \(D_{\omega}'(0)\) does not exist.  Consequently,
	the product \(P_{\omega}\) cannot be regularized by zeta
	regularization or by any method requiring meromorphic continuation
	to \(s=0\).
\end{corollary}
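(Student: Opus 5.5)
The plan is to read Corollary~\ref{cor:no_zeta} directly off Theorem~\ref{thm:meretov}. The first point is that \(D_\omega'(0)\) has no meaning from the series itself. At \(s=0\) the series \(\sum (-1)^{\omega(n)}n^{-s}\) diverges, since its terms do not tend to zero. Likewise the differentiated series \(-\sum(-1)^{\omega(n)}\log n\) diverges, because \(|S_\omega(N)-S_\omega(N-1)|=\log N\to\infty\). So any value of \(D_\omega'(0)\) would have to come from an analytic function \(G\) that is holomorphic on some open disc \(U\ni 0\) and agrees with \(D_\omega\) on a set where \(D_\omega\) is already defined. That set is \(U\cap\{\Re s>0\}\), nonempty and connected for a small disc. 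I will use this as the definition of ``\(D_\omega'(0)\) exists'', i.e.\ the only sense available to zeta regularization.

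The core step argues by contradiction. Suppose such a \(G\) exists on a disc \(U=\{|s|<r\}\). By Theorem~\ref{thm:meretov}, \(D_\omega\) is analytic on \(\Re s>0\). Define \(H\) to be \(D_\omega\) on \(\Re s>0\) and \(G\) on \(U\). These agree on the connected overlap \(U\cap\{\Re s>0\}\), so by the identity theorem \(H\) is a well-defined analytic function on \(\{\Re s>0\}\cup U\). This set contains the arc \(\{it:|t|<r\}\) of the line \(\Re s=0\) in its interior, so \(H\) continues \(D_\omega\) across that arc. Theorem~\ref{thm:meretov} forbids exactly this, so no such \(G\) exists and \(D_\omega'(0)\) is undefined.

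The same argument excludes meromorphic continuation. If \(G\) were only meromorphic on \(U\), its poles would be isolated. Shrinking to a smaller disc centred at a point \(it_0\) with \(|t_0|<r\), chosen away from these poles, gives an analytic continuation across a subarc, which is again a contradiction. Hence no meromorphic continuation to any neighbourhood of \(s=0\) exists.

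For the ``consequently'' part, I would note that zeta regularization defines \(\log P_\omega\) as \(-D_\omega'(0)\) computed from such a continuation, which we have shown does not exist. The methods listed in Section~\ref{sec:scope} (Abel/Ramanujan-type constructions, heat-kernel regularization) assign their values through a Mellin-transform representation, which must extend meromorphically to a neighbourhood of \(s=0\). By the previous step that extension is impossible, so these methods fail too. I expect the main obstacle to be making the informal clause ``any method requiring meromorphic continuation to \(s=0\)'' precise. I would handle it by stating it as a tautological reduction to the non-existence result above, rather than analysing each method separately.
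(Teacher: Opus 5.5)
Your proposal is correct and follows the paper's route. The paper gives no separate proof of this corollary and reads it off directly from the natural-boundary assertion of Theorem~\ref{thm:meretov}; your gluing argument via the identity theorem, including the reduction of the meromorphic case to a pole-free point $it_0$, simply makes that implication explicit. One caution: the gluing needs $D_\omega$ only to be meromorphic on $\Re(s)>0$, not analytic as you quote from Theorem~\ref{thm:meretov}, and that analyticity claim is in fact too strong, since $D_\omega(s)=\zeta(s)^{-1}\prod_p(1-2p^{-s})(1-p^{-s})^{-2}$ has poles at the nontrivial zeros of $\zeta$. Your conclusion is unaffected: an analytic $G$ on $U$ agreeing with $D_\omega$ forces $D_\omega$ to be pole-free on $U\cap\{\Re(s)>0\}$, after which your gluing goes through verbatim.
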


\begin{remark}
	The natural boundary at \(\Re(s)=0\) is a manifestation
	of the Estermann phenomenon: not every natural Dirichlet series
	admits a meromorphic continuation beyond its abscissa of
	convergence.  For \(D_{\omega}(s)\), the zeros of the local factors
	accumulate on the imaginary axis and create an impenetrable barrier.
	In particular, the identity
	\(D_{\omega}(s)=\prod_p(1-2p^{-s})/(1-p^{-s})\) for
	\(\Re(s)>1\) does \emph{not} extend beyond the abscissa of
	convergence, and cannot be used to define
	\(D_{\omega}(0)\).
\end{remark}

\subsection{Divergence type}

The summatory function \(T_{\omega}(N)=\sum_{n\le N}(-1)^{\omega(n)}\)
satisfies, unconditionally, the effective bound
\begin{equation}\label{eq:Tomega_effective_3}
	T_{\omega}(N)
	=
	O\!\left(N\,e^{-c\sqrt{\log N}}\right)
\end{equation}
for some absolute constant \(c>0\).  This bound is a consequence of
the classical zero-free region of \(\zeta(s)\) (de la Vall\'ee
Poussin, 1899), applied via the standard quantitative
Perron--Tauberian theorem for multiplicative functions with bounded
coefficients; precise statements can be found, for instance, in
\cite[Chapter~II.5 and Chapter~III.4]{Tenenbaum2015}.  It is stronger
than the Prime Number Theorem estimate \(T_{\omega}(N)=o(N)\), and it
is the bound that will be used throughout this paper.

For comparison, under the Riemann Hypothesis one has the conditional
refinement \(T_{\omega}(N)=O(N^{1/2+\varepsilon})\) for every
\(\varepsilon>0\).  This stronger bound is not needed for any of the
results below; we mention it only to place
\eqref{eq:Tomega_effective_3} in context.

In either case, the log-partial sums \(S_{\omega}(N)\) exhibit a
divergence of \emph{sublinear power-law type}.  In the taxonomy of
\cite{C1}, this corresponds to Type~IV (oscillatory divergence with
polynomial growth).  The divergence is fundamentally different from
the logarithmic (Type~II) and double-logarithmic (Type~III) cases
treated in \cite{C1}: the amplitude of the oscillation grows
polynomially, and no single principal part can be subtracted to
isolate a constant.
\section{Encoding and evaluation functional}
\label{sec:encoding}

The regularization of \(P_{\omega}\) proceeds within the generalized
monoid \(\widetilde{\Si}^*\) of \cite{C1}.  We make the encoding
explicit, define the weighted evaluation functional with exponent
\(\alpha=2\), and establish the convergence properties of the resulting
series.

\subsection{The canonical net}

Let \(\Si=\{a,b\}\), where \(a\) encodes the sign \(+1\) and \(b\)
encodes the sign \(-1\).  For each \(\varepsilon\in(0,1]\), set
\[
N(\varepsilon)=\lfloor\varepsilon^{-1}\rfloor,
\]
and define the word
\[
w_\varepsilon
=
w_1w_2\cdots w_{N(\varepsilon)}
\in\Si^*,
\qquad
w_k=
\begin{cases}
	a, & (-1)^{\omega(k)}=+1,\\
	b, & (-1)^{\omega(k)}=-1.
\end{cases}
\]
The net \((w_\varepsilon)_{\varepsilon\in(0,1]}\) is \emph{moderate} in
the sense of Definition~\ref{def:moderate_net}: indeed
\(|w_\varepsilon|=N(\varepsilon)\le\varepsilon^{-1}\), so one may take
\(N=1\) and \(C=1\).  We denote by
\[
\mathbf{w}_{\omega}
=
[(w_\varepsilon)]
\in\widetilde{\Si}^*
\]
the class of this net in the generalized monoid.  We call
\((w_\varepsilon)\) the \emph{canonical net} associated with the sign
sequence \((-1)^{\omega(n)}\).

\begin{remark}
	The canonical net is not unique within its class
	\(\mathbf{w}_{\omega}\): any moderate net that agrees with
	\((w_\varepsilon)\) on the first and last
	\(m\log_2(1/\varepsilon)\) letters, for every \(m\), belongs to the
	same class.  By Lemma~\ref{lem:invariance}, the regularized value
	will be independent of this choice.
\end{remark}

\subsection{The logarithmic-prefix functional}

Define the log-mould
\[
\Lambda(w)
=
\sum_{k=1}^{|w|}(-1)^{\omega(k)}\log k,
\qquad
w\in\Si^*.
\]
For the canonical net, the prefix of length \(k\) carries the first
\(k\) signs, so
\[
\Lambda(w_\varepsilon[1\ldots k])
=
\sum_{n=1}^{k}(-1)^{\omega(n)}\log n
=
\Lambda_k,
\]
where, consistently with the notation of Section~\ref{sec:framework},
\[
\Lambda_k
=
\sum_{n=1}^{k}(-1)^{\omega(n)}\log n.
\]
The functional \(\Lambda\) is a logarithmic-prefix functional in the
sense of Definition~\ref{def:log_prefix_functional}: it depends only
on the letters of the word, and its evaluation on the logarithmic
window \(N_0(\varepsilon)=\lfloor\log_2(1/\varepsilon)\rfloor\) is
moderate.

\subsection{Descent to the quotient}

Define the evaluation functional
\begin{equation}\label{eq:Fomega}
	\cF_\varepsilon(u)
	=
	\sum_{k=2}^{N_0(\varepsilon)}
	\frac{\Lambda(u[1\ldots k])}{k^2},
	\qquad
	u\in\Si^*,
\end{equation}
where \(N_0(\varepsilon)=\lfloor\log_2(1/\varepsilon)\rfloor\).  This
functional depends only on the prefix of length
\(N_0(\varepsilon)+1\) of \(u\).  By the invariance lemma for
logarithmic-prefix functionals (Lemma~\ref{lem:invariance}), if two
moderate nets \((u_\varepsilon)\) and \((v_\varepsilon)\) are
logarithmically equivalent, then their prefixes of length
\(N_0(\varepsilon)+1\) coincide eventually.  Therefore
\[
\cF_\varepsilon(u_\varepsilon)
=
\cF_\varepsilon(v_\varepsilon)
\qquad
\text{eventually,}
\]
and \(\cF_\varepsilon\) descends to a well-defined functional
\[
\widetilde{\cF}:\widetilde{\Si}^*\longrightarrow\widetilde{\CC}
\]
on the generalized monoid.

\begin{remark}[Choice of the exponent \(\alpha=2\)]
	\label{rem:alpha_equal_two}
	The exponent \(\alpha=2\) in \eqref{eq:Fomega} follows the heuristic
	\(\alpha=\sigma_1+1\) of Remark~\ref{rem:choice_alpha}, with
	\(\sigma_1=1\) the abscissa of absolute convergence of
	\(D_{\omega}(s)=\sum(-1)^{\omega(n)}n^{-s}\).  With this choice, the evaluation functional converges absolutely; the
	convergence is unconditional and follows from the classical zero-free
	region of \(\zeta(s)\); see Proposition~\ref{prop:convergence} below.
\end{remark}

\subsection{Convergence of the evaluation functional}

We now analyse the convergence of the series obtained by letting
\(\varepsilon\to0^+\) in \eqref{eq:Fomega} along the canonical net.
Define
\begin{equation}\label{eq:F_def}
	\cF
	=
	\sum_{k=2}^{\infty}\frac{\Lambda_k}{k^2}.
\end{equation}

\begin{proposition}[Absolute convergence]
	\label{prop:convergence}
	The series \(\cF\) in \eqref{eq:F_def} converges absolutely.
	Consequently, the regularized value \(P_{\omega,\mathrm{ren}}\)
	is independent of the truncation scheme and of the ordering of the
	terms.
\end{proposition}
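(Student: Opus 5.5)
The plan is to show that \(|\Lambda_k| = O(k\,e^{-c'\sqrt{\log k}}\log k)\), or any bound of the form \(|\Lambda_k|\ll k/(\log k)^{2}\). Then \(\sum_k|\Lambda_k|/k^2\) is dominated by \(\sum_k 1/(k(\log k)^2)<\infty\), which gives absolute convergence. The independence of truncation and ordering follows at once: an absolutely convergent series of complex numbers has a sum that is invariant under rearrangement. Every truncation \(\sum_{k\le N_0(\varepsilon)}\) tends to the same limit as \(N_0(\varepsilon)\to\infty\), so \(\cF=\lim_{\varepsilon\to0^+}\cF_\varepsilon(w_\varepsilon)\) exists. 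Lemma~\ref{lem:invariance} then makes \(P_{\omega,\mathrm{ren}}=\exp(\cF)\) depend only on the class \(\mathbf{w}_\omega\).

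The key input is the unconditional estimate \eqref{eq:Tomega_effective_3}, \(T_\omega(x)=O(x e^{-c\sqrt{\log x}})\), which the paper takes from the de la Vall\'ee Poussin zero-free region. I would pass from \(T_\omega\) to \(\Lambda_k=S_\omega(k)\) by the Abel summation formula \eqref{eq:abel}:
\[
\Lambda_k = T_\omega(k+1)\log k - \sum_{n=1}^{k-1} T_\omega(n+1)\log\Bigl(1+\frac1n\Bigr).
\]
The boundary term is \(O(k\log k\, e^{-c\sqrt{\log k}})\). For the sum I use \(\log(1+1/n)\le 1/n\), which bounds it by \(\sum_{n<k}|T_\omega(n+1)|/n\ll \sum_{n<k} e^{-c\sqrt{\log n}}\). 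This is \(O(k e^{-c\sqrt{\log k}/2})\) after splitting at \(n=\sqrt{k}\): the range \(n\le\sqrt k\) contributes \(O(\sqrt k)\), and on \(n>\sqrt k\) one has \(e^{-c\sqrt{\log n}}\le e^{-c\sqrt{\log k}/\sqrt2}\). Altogether
\[
|\Lambda_k|\ll k\log k\; e^{-c''\sqrt{\log k}}
\]
for some \(c''>0\). Since \(e^{-c''\sqrt{\log k}}\log k\le C_A(\log k)^{-A}\) for every \(A\), this gives \(|\Lambda_k|/k^2\ll 1/(k(\log k)^2)\), which is summable.

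I expect the main obstacle to be the input \eqref{eq:Tomega_effective_3} itself, not the elementary summation. If that bound has to be justified rather than assumed, I would write \(D_\omega(s)=\zeta(s)^{-1}G(s)\) with \(G(s)=\prod_p(1-2p^{-s})(1-p^{-s})^{-2}\)\(\cdot(1-p^{-s})\). One can check that the local factor is \(1+O(p^{-2\sigma})\), so \(G\) is absolutely convergent and bounded for \(\Re s>1/2+\delta\). With this factorization \(T_\omega\) is a convolution of \(\mu\) with a sequence \(g\) satisfying \(\sum|g(n)|n^{-1/2-\delta}<\infty\). The bound \(M(x)=O(xe^{-c\sqrt{\log x}})\) for the Mertens function, which comes from the zero-free region, then transfers to \(T_\omega\) by the standard hyperbola or convolution argument. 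In this paper, however, I would simply invoke \eqref{eq:Tomega_effective_3} as stated in Section~\ref{sec:product}.
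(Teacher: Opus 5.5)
Your proof is correct and has the same skeleton as the paper's. Both take the unconditional bound $T_\omega(x)=O(xe^{-c\sqrt{\log x}})$ as input, convert it by partial summation into $|\Lambda_k|\ll k\log k\,e^{-c'\sqrt{\log k}}$, and dominate $\sum_k|\Lambda_k|/k^2$ by a convergent series.

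Where you differ, your version is the more reliable one.
\begin{itemize}
\item For the integral term the paper uses $\int_1^k e^{-c\sqrt{\log t}}\,dt\le k\,e^{-c\sqrt{\log k}}$. This inequality fails for all large $k$ (the ratio of the two sides tends to $1$ from above) and holds only up to a constant factor. Your split at $n=\sqrt{k}$ avoids it.
\item More substantively, the paper gets the bound on $T_\omega$ by asserting that $D_\omega$ is analytic on $\Re(s)>0$ apart from $s=1$, then citing a Perron--Tauberian theorem. That analyticity claim is wrong. Write $D_\omega=G/\zeta$ as in your sketch. Then $G$ continues past $\Re(s)=1/2$ as $\zeta(2s)^{-1}$ times an Euler product absolutely convergent for $\Re(s)>1/3$. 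It does not vanish at the zeros of $\zeta$ on the critical line, so $D_\omega$ has poles there.
\end{itemize}
Your reduction $T_\omega(x)=\sum_{m\le x}g(m)M(x/m)$, with $\sum_m|g(m)|m^{-1/2-\delta}<\infty$ and a split at $m=\sqrt{x}$, is the sound way to get the bound. It isolates exactly the input that is needed: the de la Vall\'ee Poussin bound for the Mertens function.

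Two small slips to fix; your estimates are unaffected by either correction.
\begin{itemize}
\item Delete the stray factor $(1-p^{-s})$ in your formula for $G$. The intended $G=\prod_p(1-2p^{-s})(1-p^{-s})^{-2}$ has local factor $1-p^{-2s}-2p^{-3s}-\cdots$. With the extra factor, $G$ would simply be $D_\omega$.
\item The identity \eqref{eq:abel} that you quote is misindexed. It should read $\Lambda_k=T_\omega(k)\log k-\sum_{n=1}^{k-1}T_\omega(n)\log(1+1/n)$. As printed, it equals $\sum_{n\le k}(-1)^{\omega(n+1)}\log n$.
\end{itemize}
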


\begin{proof}
	By the Estermann--Dahlquist theorem
	(Theorem~\ref{thm:estermann_dahlquist}), the Dirichlet series
	$D_{\omega}(s)$ admits a meromorphic continuation to the half-plane
	$\Re(s)>0$.  In this half-plane, the only possible singularity of
	$D_{\omega}(s)$ is at $s=1$, where the simple pole of $\zeta(s)$ is
	cancelled by the simple zero of the factor $1-2^{1-s}$ at $s=1$.
	Hence $D_{\omega}(s)$ is analytic in the half-plane $\Re(s)>0$ after
	removal of the point $s=1$.
	
	Since $D_{\omega}(s)$ is a Dirichlet series with bounded
	coefficients, absolutely convergent for $\Re(s)>1$ and analytic in a
	neighbourhood of the closed half-plane $\Re(s)\ge1$ (except for the
	removable point $s=1$, which is in fact a zero), the classical
	quantitative Perron--Tauberian theorem for multiplicative functions
	with bounded coefficients applies.  We refer to
	\cite[Chapter~II.5]{Tenenbaum2015} for the general statement of the
	theorem, and to \cite[Chapter~III.4]{Tenenbaum2015} for the
	derivation of the effective bound
	\begin{equation}\label{eq:Tomega_effective}
		T_{\omega}(x)
		=
		\sum_{n\le x}(-1)^{\omega(n)}
		=
		O\!\left(x\,e^{-c\sqrt{\log x}}\right)
	\end{equation}
	for some absolute constant $c>0$, valid unconditionally.  The key
	input is the classical zero-free region of $\zeta(s)$ (de la
	Vall\'ee Poussin, 1899), which enters through the standard
	explicit-formula argument.
	
	By Abel summation,
	\[
	\Lambda_k
	=
	T_{\omega}(k)\log k-\int_1^k T_{\omega}(t)\frac{dt}{t}.
	\]
	Substituting \eqref{eq:Tomega_effective} and using that
	\(\int_1^k e^{-c\sqrt{\log t}}\,dt\le k\,e^{-c\sqrt{\log k}}\)
	for \(k\ge2\), we obtain
	\[
	|\Lambda_k|
	\le
	C\,k\,e^{-c\sqrt{\log k}}\log k
	\]
	for some absolute constant \(C>0\).  Therefore
	\[
	\frac{|\Lambda_k|}{k^2}
	\le
	C\,\frac{e^{-c\sqrt{\log k}}\log k}{k}.
	\]
	The series
	\[
	\sum_{k=2}^{\infty}\frac{e^{-c\sqrt{\log k}}\log k}{k}
	\]
	converges: substituting \(u=\log k\), it is comparable to
	\[
	\int_{\log 2}^{\infty} e^{-c\sqrt{u}}\,u\,du<\infty.
	\]
	Hence \(\sum_{k\ge2}|\Lambda_k|/k^2<\infty\), and \(\cF\)
	converges absolutely.
\end{proof}

\begin{remark}[Interpretation]
	The absolute convergence of \(\cF\) is a direct consequence of the
	effective zero-free region of \(\zeta(s)\), which is unconditional.
	In particular, no unproven hypothesis is required.  The vanishing
	of the first derivatives \(D_{\omega}(1)=D_{\omega}'(1)=0\) ensures
	that the principal terms cancel, and the residual series is
	absolutely summable thanks to the exponential decay of
	\(T_{\omega}(x)\).  This is a rigorous statement about the
	\emph{stability} of the regularization: the value
	\(P_{\omega,\mathrm{ren}}\) is invariant under reordering and
	truncation.
\end{remark}

\subsection{The regularized value}

By the descent argument of Section~\ref{sec:encoding}, the limit
\[
\cF
=
\lim_{\varepsilon\to0^+}\cF_\varepsilon(w_\varepsilon)
\]
exists and is independent of the choice of representative net in the
class \(\mathbf{w}_{\omega}\in\widetilde{\Si}^*\).  We may therefore
define the \emph{regularized value} of the product \(P_{\omega}\) by
\begin{equation}\label{eq:P_ren_intro}
	P_{\omega,\mathrm{ren}}
	=
	\exp(\cF).
\end{equation}
The numerical computation of \(\cF\) and of \(P_{\omega,\mathrm{ren}}\)
is carried out in Section~\ref{sec:numerical}.

\begin{remark}[The role of the monoid]
	The monoid \(\widetilde{\Si}^*\) plays three essential roles:
	\begin{enumerate}[label=(\roman*)]
		\item It provides a \emph{domain} on which the sign sequence
		\((-1)^{\omega(n)}\) can be encoded as a single algebraic
		object \(\mathbf{w}_{\omega}\), despite being an infinite
		word.
		\item It guarantees that the evaluation functional
		\(\cF_\varepsilon\) is \emph{well defined} on this object,
		via the invariance lemma (Lemma~\ref{lem:invariance}).
		\item It makes the regularized value \emph{canonical}: the
		value \(P_{\omega,\mathrm{ren}}\) depends only on the class
		\(\mathbf{w}_{\omega}\), not on the particular net
		\((w_\varepsilon)\) chosen to represent it.
	\end{enumerate}
	Without the monoid, one could still compute the numerical value
	\(0.7731\) by summing the series, but one could not assert that
	this value is intrinsic to the sign sequence \((-1)^{\omega(n)}\)
	rather than an artifact of the truncation.
\end{remark}

\begin{remark}[Insensitivity to the natural boundary]
	The natural boundary of \(D_{\omega}(s)\) at \(\Re(s)=0\)
	(Theorem~\ref{thm:meretov}) is an obstruction to \emph{analytic
		continuation}.  The monoid framework does not attempt analytic
	continuation: it operates directly on the asymptotic prefix
	profile of the sign sequence, which is a purely combinatorial
	object.  The natural boundary is therefore irrelevant to the
	monoid construction.  This is the structural reason why the
	monoid succeeds where zeta regularization fails.
\end{remark}
	
\section{Analytical structure at \(s=1\)}
\label{sec:analytic_s1}

The evaluation functional \(\cF\) studied in Section~\ref{sec:encoding}
converges absolutely, and its value is related to the behaviour of
the Dirichlet series \(D_{\omega}(s)\) near \(s=1\).  In this section we
determine the local analytic structure of \(D_{\omega}(s)\) at \(s=1\).
The main result is the exact evaluation of the second derivative,
which connects the arithmetic of \((-1)^{\omega(n)}\) to the
Hardy--Littlewood twin prime constant.

\subsection{Vanishing of the first derivatives}

We begin with the elementary observation that \(D_{\omega}(s)\) has a
zero of order at least \(2\) at \(s=1\).  This is a direct consequence
of the local factor at \(p=2\).

\begin{lemma}\label{lem:D_zero_order}
	The Dirichlet series \(D_{\omega}(s)\) satisfies
	\[
	D_{\omega}(1)=0,
	\qquad
	D_{\omega}'(1)=0.
	\]
\end{lemma}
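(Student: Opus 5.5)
The plan is to factor $D_{\omega}(s)$ near $s=1$ into pieces whose local behaviour is transparent, and then count the order of vanishing. Starting from the Euler product \eqref{eq:euler_correct} for $\Re(s)>1$, I will compare $D_{\omega}$ with $1/\zeta(s)=\prod_p(1-p^{-s})$ rather than with $\zeta(s)$. Dividing the local factors gives
\[
D_{\omega}(s)\,\zeta(s)
=\prod_{p}\frac{1-2p^{-s}}{(1-p^{-s})^{2}}
=\prod_{p}\Bigl(1-\frac{p^{-2s}}{(1-p^{-s})^{2}}\Bigr).
\]
The product on the right converges absolutely and locally uniformly for $\Re(s)>1/2$, since its general term is $1+O(p^{-2\sigma})$. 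Hence it defines an analytic function $G(s)$ there. By uniqueness of analytic continuation, $D_{\omega}(s)=G(s)/\zeta(s)$ on $\Re(s)>1/2$, $s\neq1$, and this agrees with the continuation of Theorem~\ref{thm:meretov}.

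Next I will isolate the factor at $p=2$. Write
\[
G(s)=\frac{1-2^{1-s}}{(1-2^{-s})^{2}}\,H(s),
\qquad
H(s)=\prod_{p\ge3}\frac{1-2p^{-s}}{(1-p^{-s})^{2}}.
\]
Here $H$ is analytic for $\Re(s)>1/2$. At $s=1$ every factor of $H$ is positive, because $1-2/p>0$ for $p\ge3$, and the product converges absolutely. Therefore
\[
H(1)=\prod_{p\ge3}\frac{p(p-2)}{(p-1)^{2}}=C_{2}\neq0.
\]
Thus near $s=1$ we have
\[
D_{\omega}(s)=\frac{1}{\zeta(s)}\,\bigl(1-2^{1-s}\bigr)\,\frac{H(s)}{(1-2^{-s})^{2}}.
\]
The three pieces behave as follows:
\begin{enumerate}[label=(\roman*)]
\item $1/\zeta(s)$ has a simple zero at $s=1$, with $1/\zeta(s)=(s-1)+O((s-1)^2)$, because $\zeta$ has a simple pole of residue $1$.
\item $1-2^{1-s}$ has a simple zero at $s=1$, with $1-2^{1-s}=(\ln 2)(s-1)+O((s-1)^2)$.
\item The last factor is analytic and nonzero at $s=1$, with value $C_2/(1/2)^2=4C_2$.
\end{enumerate}
Multiplying these gives $D_{\omega}(s)=4C_2\ln 2\,(s-1)^2+O((s-1)^3)$. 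Hence $D_{\omega}(1)=D_{\omega}'(1)=0$, and in fact the zero has order exactly two. As a by-product this yields $D_{\omega}''(1)=8C_2\ln2$, which anticipates Proposition~\ref{prop:D_second_derivative}.

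The main point requiring care is the first step. One must justify that the values $D_{\omega}(1)$ and $D_{\omega}'(1)$ are those of the analytic function $G/\zeta$, and not of a formal Euler product evaluated at $s=1$. The local factor at $p=2$ vanishes at $s=1$, and $\prod_p(1-2p^{-s})$ by itself diverges there. So the argument must route everything through the absolutely convergent product $G$ on $\Re(s)>1/2$, never through the original Euler product. One should also note the orders explicitly. The zero of $1-2^{1-s}$ does not cancel a pole. Written this way, $D_{\omega}$ carries $1/\zeta$ rather than $\zeta$, so the two simple zeros add to give a double zero.
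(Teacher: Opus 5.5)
Your argument is correct, but it is organized differently from the paper's proof of this lemma.

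The paper keeps the original local factors and writes $D_{\omega}=fH$, with $f(s)=(1-2^{1-s})/(1-2^{-s})$ and $H(s)=\prod_{p\ge3}(1-2p^{-s})/(1-p^{-s})$. It then only needs $H(1)=0$ and the existence of $H'(1)$. The Leibniz rule gives $D_{\omega}(1)=D_{\omega}'(1)=0$ without fixing the exact order of vanishing. To get $H(1)=0$, it compares $A(s)=\prod_{p\ge3}(1-2p^{-s})$ with $\zeta(s)^{-2}$ via the estimate $\log A(s)=-2\log\zeta(s)+O(1)$.

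You instead divide out $1/\zeta(s)$ once and for all, so the arithmetic sits in the product $G$, which converges absolutely on $\Re(s)>1/2$. The order of the zero is then read off from three factors that are visibly analytic at $s=1$. You get the exact order two, and $D_{\omega}''(1)=8C_2\ln 2$, for free. This is the decomposition the paper introduces only later, in the proof of Proposition~\ref{prop:D_second_derivative}.

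Your route is also the more robust one:
\begin{itemize}
\item $A(s)$ does not converge as a product at $s=1$, because $\sum_p 1/p=\infty$. Its ``double zero'' is meaningful only after rewriting $A$ as an absolutely convergent product times $((1-2^{-s})\zeta(s))^{-2}$, which is exactly what your $G$ does.
\item The relation the paper actually needs is $H=A\,(1-2^{-s})\,\zeta(s)$, not the displayed version with $\zeta(s)$ in the denominator. This gives a simple zero of $H$, consistent with $H'(1)=2C_2$ later.
\end{itemize}

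One minor wording point: $D_{\omega}=G/\zeta$ is an identity of meromorphic functions on $\Re(s)>1/2$. There is no reason to exclude $s=1$, where $1/\zeta$ is analytic. For the lemma only a neighbourhood of $s=1$ matters, and $\zeta$ has no zeros there.
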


\begin{proof}
	For \(\Re(s)>1\), the Euler product is
	\[
	D_{\omega}(s)
	=
	\prod_{p}\frac{1-2p^{-s}}{1-p^{-s}}.
	\]
	Isolate the prime \(p=2\):
	\[
	D_{\omega}(s)=f(s)H(s),
	\]
	where
	\[
	f(s)=\frac{1-2^{1-s}}{1-2^{-s}},
	\qquad
	H(s)=\prod_{p\ge 3}\frac{1-2p^{-s}}{1-p^{-s}}.
	\]
	At \(s=1\), we have \(f(1)=0\), and a direct computation gives
	\(f'(1)=2\ln 2\).
	
	We now prove that \(H(1)=0\).  Rewrite \(H(s)\) as
	\[
	H(s)
	=
	\prod_{p\ge 3}(1-2p^{-s})
	\;\cdot\;
	\prod_{p\ge 3}\frac{1}{1-p^{-s}}
	=
	A(s)\,\frac{1-2^{-s}}{\zeta(s)},
	\]
	where
	\[
	A(s)=\prod_{p\ge 3}(1-2p^{-s})
	\]
	is analytic for \(\Re(s)>0\) (with zeros but no poles), and we used
	the factorization
	\[
	\prod_{p\ge 3}\frac{1}{1-p^{-s}}
	=
	(1-2^{-s})\zeta(s).
	\]
	Since \(\zeta(s)\) has a simple pole at \(s=1\), the factor
	\((1-2^{-s})/\zeta(s)\) has a simple zero there.  On the other
	hand, from
	\[
	\log A(s)
	=
	\sum_{p\ge 3}\log(1-2p^{-s})
	=
	-2\sum_{p\ge 3}p^{-s}+O(1)
	\]
	and the standard estimate
	\(\sum_{p\ge 3}p^{-s}=\log\zeta(s)+O(1)\) as \(s\to1\), we obtain
	\[
	\log A(s)
	=
	-2\log\zeta(s)+O(1)
	=
	2\log(s-1)+O(1),
	\]
	so \(A\) has a zero of order \(2\) at \(s=1\).  Since the factor
	\((1-2^{-s})/\zeta(s)\) has a simple zero there, the product
	\(H(s)\) is analytic in a neighbourhood of \(s=1\), and satisfies
	\(H(1)=0\).  Moreover, \(H'(1)\) exists.
	
	Therefore
	\[
	D_{\omega}(1)=f(1)H(1)=0,
	\]
	and, since \(f(1)=0\) and \(H(1)=0\),
	\[
	D_{\omega}'(1)
	=
	f'(1)H(1)+f(1)H'(1)
	=
	0.
	\]
	This proves the claim.
\end{proof}

\begin{remark}
	The vanishing of \(D_{\omega}(1)\) and \(D_{\omega}'(1)\) is the
	analytic counterpart of the fact that the leading term in the
	asymptotic expansion of the log-partial sums vanishes.  It is also
	the reason why the leading contribution to the evaluation
	functional \(\cF\) cancels: the cancellation responsible for
	\(D_{\omega}'(1)=0\) is encoded in the alternating sign sequence,
	while the residual series is absolutely summable by
	Proposition~\ref{prop:convergence}.
\end{remark}

\subsection{The second derivative and the twin prime constant}

We now compute the second derivative of \(D_{\omega}(s)\) at \(s=1\).
The result is unconditional and involves the Hardy--Littlewood twin
prime constant
\[
C_2
=
\prod_{p\ge 3}\frac{p(p-2)}{(p-1)^2}
\approx
0.660161815846869573927812110014\ldots
\]

\begin{proposition}\label{prop:D_second_derivative}
	The second derivative of \(D_{\omega}(s)\) at \(s=1\) is given by
	\begin{equation}\label{eq:D_second_derivative}
		D_{\omega}''(1)=8\,C_2\ln 2
		\approx
		3.660714432\ldots
	\end{equation}
\end{proposition}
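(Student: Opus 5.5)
The plan is to refine the factorization $D_{\omega}(s)=f(s)H(s)$ from the proof of Lemma~\ref{lem:D_zero_order}. That factorization already shows that $D_{\omega}$ has a zero of order at least $2$ at $s=1$, with $f(1)=H(1)=0$ and $f'(1)=2\ln 2$. By the Leibniz rule,
\[
D_{\omega}''(1)=f''(1)H(1)+2f'(1)H'(1)+f(1)H''(1)=2f'(1)H'(1)=4\ln 2\,H'(1),
\]
so the statement reduces to the single identity $H'(1)=2C_2$. To prove it, I will not rely on the factor $A(s)$, because $\prod_{p\ge3}(1-2p^{-s})$ diverges to $0$ at $s=1$ and only yields the order of vanishing. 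Instead I will split off a convergent product whose value at $s=1$ is exactly the twin prime constant.

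Concretely, for each $p\ge 3$ I write
\[
\frac{1-2p^{-s}}{1-p^{-s}}=(1-p^{-s})\cdot\frac{1-2p^{-s}}{(1-p^{-s})^{2}},
\]
and set
\[
G(s)=\prod_{p\ge3}\frac{1-2p^{-s}}{(1-p^{-s})^{2}}.
\]
Since $(1-2x)/(1-x)^2=1-x^2/(1-x)^2$, each factor equals $1+O(p^{-2\sigma})$, uniformly for $\sigma\ge\sigma_0>1/2$. No factor vanishes there, because $|2p^{-s}|\le 2\cdot 3^{-\sigma}<1$ once $\sigma>\log 2/\log 3$. Hence $G$ is holomorphic and nonvanishing in a neighbourhood of $s=1$, and
\[
G(1)=\prod_{p\ge3}\frac{1-2/p}{(1-1/p)^2}=\prod_{p\ge3}\frac{p(p-2)}{(p-1)^2}=C_2 .
\]
The remaining factor is $\prod_{p\ge3}(1-p^{-s})=\bigl((1-2^{-s})\zeta(s)\bigr)^{-1}$ for $\Re(s)>1$. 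This gives
\[
H(s)=\frac{G(s)}{(1-2^{-s})\,\zeta(s)},
\]
which provides the analytic continuation of $H$ to a neighbourhood of $s=1$. Note that this corrects the placement of the factor $1-2^{-s}$ in the proof of Lemma~\ref{lem:D_zero_order}; the correction is harmless for that lemma, since only the order of vanishing was used there.

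Next I expand near $s=1$. Since $\zeta(s)=\frac{1}{s-1}+\gamma+O(s-1)$, we have $1/\zeta(s)=(s-1)+O\bigl((s-1)^2\bigr)$. Combined with $1-2^{-1}=\tfrac12$, this gives
\[
H(s)=2\,G(1)\,(s-1)+O\bigl((s-1)^2\bigr),
\]
hence $H(1)=0$ and $H'(1)=2C_2$. Inserting this into the Leibniz formula gives $D_{\omega}''(1)=4\ln 2\cdot 2C_2=8C_2\ln 2$. Finally, I will recheck $f'(1)$ directly: writing $f(s)=(1-2^{1-s})/(1-2^{-s})$, the numerator has derivative $\ln 2\cdot 2^{1-s}$, equal to $\ln 2$ at $s=1$, and the denominator equals $\tfrac12$ there. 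So $f'(1)=2\ln 2$, as used above, and the numerical value $\approx 3.6607$ follows from $C_2\approx0.66016$.

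The main obstacle is the analytic justification rather than the algebra. First, the identity $D_{\omega}=fH$ and the formula for $H$ hold a priori only for $\Re(s)>1$. I must show that the right-hand side, $f(s)G(s)/\bigl((1-2^{-s})\zeta(s)\bigr)$, is holomorphic on a full disc around $s=1$, and that it agrees with the continuation given by Theorem~\ref{thm:meretov}; uniqueness of analytic continuation then identifies the two, so the derivatives at $s=1$ are those of this explicit expression. This needs the locally uniform convergence of the product defining $G$ for $\Re(s)>1/2$ and the regularity of $1/\zeta$ at $s=1$. Once this is in place, the rest is a two-term Taylor expansion.
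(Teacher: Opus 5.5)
Your proposal is correct and is essentially the paper's own proof. It uses the Leibniz reduction $D_{\omega}''(1)=2f'(1)H'(1)$ with $f'(1)=2\ln 2$, the factorization $H(s)=G(s)/\bigl((1-2^{-s})\zeta(s)\bigr)$ with $G(1)=C_2$, and the expansion $1/\zeta(s)=(s-1)+O\bigl((s-1)^2\bigr)$, which gives $H'(1)=2C_2$. Your added justification (locally uniform convergence and non-vanishing of $G$ near $s=1$, plus identification with the continuation by uniqueness) tightens the same argument. One quibble in your side remark: in the lemma's formula via $A(s)$ it is $\zeta(s)$, not $1-2^{-s}$, that is misplaced, since the correct identity is $H(s)=A(s)(1-2^{-s})\zeta(s)$ for $\Re(s)>1$.
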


\begin{proof}
	We keep the notation of the proof of Lemma~\ref{lem:D_zero_order}.
	We have \(D_{\omega}(s)=f(s)H(s)\), with
	\[
	f(s)=\frac{1-2^{1-s}}{1-2^{-s}},
	\qquad
	H(s)=\prod_{p\ge 3}\frac{1-2p^{-s}}{1-p^{-s}}.
	\]
	Since \(f(1)=0\) and \(H(1)=0\) (the latter was proved in
	Lemma~\ref{lem:D_zero_order}), the Leibniz rule gives
	\[
	D_{\omega}''(1)
	=
	f''(1)H(1)+2f'(1)H'(1)+f(1)H''(1)
	=
	2f'(1)H'(1).
	\]
	We have already computed \(f'(1)=2\ln 2\).  It remains to compute
	\(H'(1)\).
	
	Write
	\[
	H(s)=\frac{G(s)}{\zeta(s)(1-2^{-s})},
	\]
	where
	\[
	G(s)
	=
	\prod_{p\ge 3}\frac{1-2p^{-s}}{(1-p^{-s})^2}
	=
	\prod_{p\ge 3}\frac{p^s(p^s-2)}{(p^s-1)^2}.
	\]
	The product \(G(s)\) converges absolutely for \(\Re(s)>1/2\).  At
	\(s=1\),
	\[
	G(1)
	=
	\prod_{p\ge 3}\frac{p(p-2)}{(p-1)^2}
	=
	C_2.
	\]
	Since \(\zeta(s)\) has a simple pole at \(s=1\) with residue \(1\),
	we have
	\[
	\frac{1}{\zeta(s)}=(s-1)+O\bigl((s-1)^2\bigr).
	\]
	Also \(1-2^{-s}\to 1/2\) as \(s\to 1\).  Hence
	\[
	H(s)
	=
	2C_2(s-1)+O\bigl((s-1)^2\bigr),
	\]
	so \(H(1)=0\) and
	\[
	H'(1)=2C_2.
	\]
	Substituting into the expression for \(D_{\omega}''(1)\), we obtain
	\[
	D_{\omega}''(1)
	=
	2\cdot(2\ln 2)\cdot(2C_2)
	=
	8C_2\ln 2.
	\]
	This completes the proof.
\end{proof}

\begin{remark}\label{rem:correction}
	Some preliminary notes contained the value \(2C_2\ln 2\) for
	\(D_{\omega}''(1)\).  That value is incorrect.  The correct value is
	\(8C_2\ln 2\), as proved above.  The error arose from an
	incomplete treatment of the factor \((1-2^{-s})^{-1}\) in the
	expression for \(H(s)\).
\end{remark}

\begin{remark}[Arithmetic interpretation]
	The appearance of the twin prime constant \(C_2\) in the second
	derivative is not accidental.  The local factor of \(D_{\omega}(s)\)
	at an odd prime \(p\) is
	\[
	\frac{1-2p^{-s}}{1-p^{-s}}
	=
	1-\frac{p^{-s}}{1-p^{-s}}
	=
	1-p^{-s}-p^{-2s}-p^{-3s}-\cdots
	\]
	The coefficient of \(p^{-s}\) is \(-1\), which is the same as the
	coefficient of \(p^{-s}\) in the Euler product of
	\(\zeta(s)^{-1}\).  The second-order correlations between distinct
	primes are therefore governed by the same combinatorial structure
	that produces the twin prime constant.  This is a manifestation of
	the fact that \((-1)^{\omega(n)}\) is a multiplicative function
	whose local factors are rational functions with a zero at
	\(p^{-s}=1/2\).
\end{remark}

\subsection{Consequences for the evaluation functional}

The exact evaluation of \(D_{\omega}''(1)\) provides a companion
analytical identity for the numerical value of \(\cF\).  We record the
following elementary consequence.

\begin{corollary}\label{cor:F_anchor}
	The evaluation functional \(\cF\) satisfies
	\[
	\cF
	=
	-D_{\omega}'(1)
	+
	O(1)
	=
	O(1),
	\]
	and the leading non-trivial term in the expansion of
	\(D_{\omega}(s)\) near \(s=1\) is
	\[
	\frac{1}{2}D_{\omega}''(1)(s-1)^2
	=
	4C_2\ln 2\,(s-1)^2.
	\]
	No quantitative identity linking \(\cF\) to
	\(D_{\omega}''(1)\) is claimed.
\end{corollary}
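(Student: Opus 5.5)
The plan is to derive both assertions of Corollary~\ref{cor:F_anchor} from results already established. These are the absolute convergence of Proposition~\ref{prop:convergence}, the vanishing $D_\omega(1)=D_\omega'(1)=0$ of Lemma~\ref{lem:D_zero_order}, the analyticity of $D_\omega$ near $s=1$ from Theorem~\ref{thm:meretov}, and the value of $D_\omega''(1)$ from Proposition~\ref{prop:D_second_derivative}. No new analytic input should be needed. The main work is justifying the rearrangement \eqref{eq:F_transparent}.

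\textbf{Step 1 (rearrangement).} I will first justify rigorously that $\cF=\sum_{n\ge1}\sigma(n)\log n\,\psi^{(1)}(n)$, with the series on the right read as the limit of its partial sums. By summation by parts on a finite range,
\[
\sum_{k=2}^{K}\frac{\Lambda_k}{k^2}=\sum_{n=1}^{K}\sigma(n)\log n\,\bigl(\psi^{(1)}(n)-\psi^{(1)}(K+1)\bigr),
\]
so the two partial sums differ by the boundary term $\Lambda_K\psi^{(1)}(K+1)$. Since $\psi^{(1)}(K+1)\asymp 1/K$, the proof of Proposition~\ref{prop:convergence} gives $|\Lambda_K|\le CKe^{-c\sqrt{\log K}}\log K$, and hence this boundary term is $O(e^{-c\sqrt{\log K}}\log K)\to0$. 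Letting $K\to\infty$ yields the identity.

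\textbf{Step 2 (splitting).} Next I will write $\psi^{(1)}(n)=1/n+r(n)$ with $r(n)=O(n^{-2})$. The series $\sum\sigma(n)\log n\,r(n)$ converges absolutely, so it contributes an $O(1)$ constant. The remaining series $\sum\sigma(n)\log n/n$ must then converge, as a difference of two convergent series. I plan to double-check this directly by Abel summation: its partial sums are $\Lambda_N/N+\sum_{k<N}\Lambda_k/(k(k+1))$, which converge by the bound on $\Lambda_k$.

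\textbf{Step 3 (identification with $-D_\omega'(1)$).} I must show that $\sum\sigma(n)\log n/n=-D_\omega'(1)$. This is the step I expect to be the main obstacle, because the Dirichlet series $-D_\omega'(s)=\sum\sigma(n)\log n\,n^{-s}$ is only conditionally convergent at $s=1$. I will use the Abelian theorem for Dirichlet series: a Dirichlet series convergent at $s_0$ converges uniformly on real segments $[s_0,s_0+1]$ and is continuous there from the right. This gives $\sum\sigma(n)\log n/n=\lim_{s\to1^+}(-D_\omega'(s))$. Since $D_\omega$ is analytic in a neighbourhood of $s=1$ (Theorem~\ref{thm:meretov}, removable singularity), this limit equals $-D_\omega'(1)$, which is $0$ by Lemma~\ref{lem:D_zero_order}. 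Combining with Step 2 gives $\cF=-D_\omega'(1)+O(1)=O(1)$. This is of course also consistent with the finiteness already given by Proposition~\ref{prop:convergence}.

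\textbf{Step 4 (Taylor expansion).} Because $D_\omega$ is holomorphic at $s=1$, it has the expansion
\[
D_\omega(s)=D_\omega(1)+D_\omega'(1)(s-1)+\tfrac12D_\omega''(1)(s-1)^2+O\bigl((s-1)^3\bigr).
\]
The first two coefficients vanish by Lemma~\ref{lem:D_zero_order}. Substituting $D_\omega''(1)=8C_2\ln2$ from Proposition~\ref{prop:D_second_derivative} gives the leading term $4C_2\ln2\,(s-1)^2$, which is nonzero since $C_2>0$. The final sentence of the corollary is a disclaimer rather than a claim, so it requires no proof.
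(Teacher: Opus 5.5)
Your proposal is correct and follows the same route as the paper. The paper's proof invokes the decomposition \eqref{eq:F_transparent} together with $D_{\omega}'(1)=0$ for the first assertion, and the Taylor expansion at $s=1$ (via Lemma~\ref{lem:D_zero_order} and Proposition~\ref{prop:D_second_derivative}) for the second; your Steps~1 and~3 supply what it leaves implicit, namely the finite summation by parts with vanishing boundary term $\Lambda_K\psi^{(1)}(K+1)$ (needed because the interchange is not absolutely convergent) and the Abelian theorem identifying the conditionally convergent series $\sum\sigma(n)\log n/n$ with $-D_{\omega}'(1)$.
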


\begin{proof}
	The first assertion follows from the decomposition
	\eqref{eq:F_transparent} and the fact that \(D_{\omega}'(1)=0\).
	The second assertion is the Taylor expansion of \(D_{\omega}(s)\)
	at \(s=1\), using Lemma~\ref{lem:D_zero_order} and
	Proposition~\ref{prop:D_second_derivative}.
\end{proof}

\begin{remark}
	Corollary~\ref{cor:F_anchor} explains why the numerical computation
	of \(\cF\) in Section~\ref{sec:numerical} converges to a finite
	value: the leading divergence is cancelled by the vanishing of
	\(D_{\omega}(1)\) and \(D_{\omega}'(1)\), and the residual series
	is absolutely summable by Proposition~\ref{prop:convergence}.  The
	vanishing of the first derivatives is of the same arithmetic origin
	as the companion identity
	\(D_{\omega}''(1)=8C_2\ln 2\), although no quantitative relation
	between \(\cF\) and \(D_{\omega}''(1)\) is established.
\end{remark}

\section{Numerical computation and error analysis}
\label{sec:numerical}

In this section we present the numerical computation of the evaluation
functional \(\cF\) and of the regularized value
\(P_{\omega,\mathrm{ren}}\), together with a rigorous error analysis
comparing two analytically distinct truncation methods.

\subsection{Direct computation of the partial sums}

For \(K\ge 2\), define the partial sum
\begin{equation}\label{eq:F_K}
	\cF(K)
	=
	\sum_{k=2}^{K}\frac{\Lambda_k}{k^2},
	\qquad
	\Lambda_k=\sum_{n=1}^{k}(-1)^{\omega(n)}\log n.
\end{equation}
The computation of \(\omega(n)\) up to \(N=10^7\) was performed using a
linear sieve.  Table~\ref{tab:numerics} reports the values of
\(\cF(K)\) and of \(\exp(\cF(K))\) at increasing truncation points
\(K\).

\begin{table}[ht]
	\centering
	\caption{Partial sums \(\cF(K)\) and truncated regularized values
		\(\exp(\cF(K))\) for the product \(P_{\omega}\).}
	\label{tab:numerics}
	\small
	\setlength{\tabcolsep}{6pt}
	\begin{tabular}{@{}rcc@{}}
		\toprule
		\(K\) &
		\(\cF(K)\) &
		\(\exp(\cF(K))\) \\
		\midrule
		\(10^3\) & \(-0.254672033272\) & \(0.775170686470\) \\
		\(10^4\) & \(0.042387715196\)  & \(1.043298903183\) \\
		\(10^5\) & \(-0.113368031346\) & \(0.892822013156\) \\
		\(10^6\) & \(-0.228533587726\) & \(0.795699571016\) \\
		\(2\times10^6\) & \(-0.243259565281\) & \(0.784067970533\) \\
		\(5\times10^6\) & \(-0.253866732949\) & \(0.775795183094\) \\
		\(10^7\) & \(-0.257334984818\) & \(0.773109190542\) \\
		\bottomrule
	\end{tabular}
\end{table}

The sequence oscillates but the amplitude of the oscillation decreases
as \(K\) increases.  The last four values
\(\cF(10^6)=-0.2285\), \(\cF(2\times10^6)=-0.2433\),
\(\cF(5\times10^6)=-0.2539\), and \(\cF(10^7)=-0.2573\) suggest that
\(\cF(K)\) is converging to a limit in the vicinity of \(-0.26\).  A
conservative estimate is
\[
\cF\approx-0.257\pm0.005,
\]
which gives
\[
P_{\omega,\mathrm{ren}}
=
\exp(\cF)
\approx
0.773\pm0.004.
\]

\begin{remark}\label{rem:oscillation}
	The pronounced oscillation of the early partial sums (for instance,
	\(\cF(10^4)=+0.0424\)) reflects the slow initial decay of the
	summands \(\Lambda_k/k^2\): although the series converges
	absolutely, the effective bound
	\(|\Lambda_k|/k^2=O(e^{-c\sqrt{\log k}}\log k/k)\) is dominated by
	the prefactor \(\Lambda_K/K\) for moderate values of \(K\), so the
	partial sums exhibit large fluctuations before settling down.  The
	stabilization observed for \(K\ge 5\times10^6\) is consistent with
	the theoretical prediction of Proposition~\ref{prop:convergence}.
\end{remark}

\subsection{Truncation via the trigamma function}

To obtain an independent numerical verification, we introduce a second
method based on the trigamma function
\[
\psi^{(1)}(n)=\sum_{m=n}^{\infty}\frac{1}{m^2}.
\]
Define the trigamma-truncated functional
\begin{equation}\label{eq:F_trigamma}
	\cF_{\mathrm{Tri}}(K)
	=
	\sum_{n=1}^{K}(-1)^{\omega(n)}\log n\,\psi^{(1)}(n).
\end{equation}
This representation follows from exchanging the order of summation in
\eqref{eq:F_K} and replacing the inner sum
\(\sum_{k=n}^{K}k^{-2}\) by its full limit
\(\psi^{(1)}(n)=\sum_{k=n}^{\infty}k^{-2}\).

\subsection{Comparison between the two methods}

For a finite truncation \(K\), the two methods are not identically
equal.  The exact algebraic difference is given by the neglected tail
of the inner sum in the direct method.

\begin{proposition}[Error analysis]\label{prop:error_analysis}
	For every \(K\ge 2\),
	\begin{equation}\label{eq:error_exact}
		\cF_{\mathrm{Tri}}(K)-\cF(K)
		=
		\frac{\Lambda_K}{K}+R_K,
	\end{equation}
	where the remainder satisfies
	\begin{equation}\label{eq:error_bound}
		|R_K|
		\le
		C\,\frac{e^{-c\sqrt{\log K}}\log K}{K}
	\end{equation}
	for absolute constants \(C,c>0\).  In particular,
	\[
	\bigl|\cF_{\mathrm{Tri}}(K)-\cF(K)\bigr|
	=
	O\!\left(\frac{e^{-c\sqrt{\log K}}\log K}{K}\right),
	\]
	which is \(o(K^{-\alpha})\) for every \(0<\alpha\le1\), but not
	for \(\alpha>1\).
\end{proposition}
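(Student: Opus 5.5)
The plan is to compute the difference \(\cF_{\mathrm{Tri}}(K)-\cF(K)\) \emph{exactly} by exchanging the order of summation, and only afterwards estimate it. Write \(a(n)=(-1)^{\omega(n)}\). Since the \(n=1\) term vanishes (\(\log 1=0\)), we have
\[
\cF(K)=\sum_{k=2}^{K}\frac{1}{k^2}\sum_{n=1}^{k}a(n)\log n=\sum_{n=1}^{K}a(n)\log n\sum_{k=n}^{K}\frac{1}{k^2}.
\]
Subtracting this from \eqref{eq:F_trigamma} leaves, for each \(n\le K\), the tail \(\psi^{(1)}(n)-\sum_{k=n}^{K}k^{-2}=\psi^{(1)}(K+1)\). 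This tail is independent of \(n\), so it factors out and gives the closed form
\[
\cF_{\mathrm{Tri}}(K)-\cF(K)=\Lambda_K\,\psi^{(1)}(K+1).
\]
This is the exact algebraic identity underlying \eqref{eq:error_exact}.

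Next I expand the trigamma tail. By Euler--Maclaurin, or by comparing \(\sum_{m>K}m^{-2}\) with the integrals \(\int_K^\infty\) and \(\int_{K+1}^\infty\) of \(t^{-2}\), we get \(\psi^{(1)}(K+1)=1/K-1/(2K^2)+O(K^{-3})\). In particular \(\bigl|\psi^{(1)}(K+1)-1/K\bigr|\le K^{-2}\). Defining \(R_K=\Lambda_K\bigl(\psi^{(1)}(K+1)-1/K\bigr)\) yields \eqref{eq:error_exact} and \(|R_K|\le|\Lambda_K|/K^2\). Inserting the bound \(|\Lambda_K|\le C K e^{-c\sqrt{\log K}}\log K\), obtained in the proof of Proposition~\ref{prop:convergence} from \eqref{eq:Tomega_effective} via Abel summation, gives exactly \eqref{eq:error_bound}. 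In fact this is the bound for \(R_K\) with room to spare.

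For the final "in particular" clause one must also control the main term \(\Lambda_K/K\). Here the same input only gives
\[
|\Lambda_K|/K\le C\,e^{-c\sqrt{\log K}}\log K,
\]
which lacks the extra factor \(1/K\) in the stated rate. This is the main obstacle. To reach the stated rate, the plan would be to seek a bound \(\Lambda_K=O(e^{-c\sqrt{\log K}}\log K)\), i.e.\ essentially bounded prefix sums. I would try the Abel representation \(\Lambda_K=T_\omega(K)\log K-\int_1^K T_\omega(t)\,dt/t\) and look for cancellation between its two pieces.

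I expect this step to fail. \(T_\omega\) is integer-valued and should oscillate with unbounded amplitude, so no such bound for \(\Lambda_K\) is available from the zero-free region. What the argument rigorously delivers is therefore the following:
\begin{itemize}
\item the identity \eqref{eq:error_exact} together with \eqref{eq:error_bound}, i.e.\ \(\cF_{\mathrm{Tri}}(K)-\cF(K)-\Lambda_K/K=O(e^{-c\sqrt{\log K}}\log K/K)\);
\item the full difference \(O(e^{-c\sqrt{\log K}}\log K)\), which is \(o(1)\).
\end{itemize}
The "\(o(K^{-\alpha})\) for \(0<\alpha\le1\), not for \(\alpha>1\)" assertion holds as stated for \(R_K\). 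For the full difference it holds only after the term \(\Lambda_K/K\) has been subtracted.
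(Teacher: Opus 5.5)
Your derivation of the identity and of the bound on \(R_K\) is the same as the paper's. You exchange the order of summation to get \(\cF_{\mathrm{Tri}}(K)-\cF(K)=\Lambda_K\,\psi^{(1)}(K+1)\) exactly, expand \(\psi^{(1)}(K+1)=1/K+O(K^{-2})\), and insert the Abel-summation bound \(|\Lambda_K|\le CKe^{-c\sqrt{\log K}}\log K\). Your objection to the ``in particular'' clause is correct, and it locates a genuine error in the paper's own proof. After reaching \(\Lambda_K/K+O(|\Lambda_K|/K^2)\), the paper substitutes the bound on \(|\Lambda_K|\) and reports \(Ce^{-c\sqrt{\log K}}\log K/K\) as a bound on the \emph{whole} difference. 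That bound only controls the \(O(|\Lambda_K|/K^2)\) part. The main term \(\Lambda_K/K\) is only \(O(e^{-c\sqrt{\log K}}\log K)\), exactly as you say.

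You can replace your heuristic (``I expect this step to fail'') with a short disproof. It also shows that the required input is stronger than ``essentially bounded prefix sums'': the stated rate, together with \(\psi^{(1)}(K+1)\ge 1/(K+1)\), would force \(|\Lambda_K|=O(e^{-c\sqrt{\log K}}\log K)\), i.e.\ \(\Lambda_K\to0\). But \(\Lambda_K-\Lambda_{K-1}=(-1)^{\omega(K)}\log K\), so \(|\Lambda_K|+|\Lambda_{K-1}|\ge\log K\). Hence \(|\Lambda_j|\ge\frac12\log j\) for infinitely many \(j\), and for those \(j\)
\[
\bigl|\cF_{\mathrm{Tri}}(j)-\cF(j)\bigr|
=
|\Lambda_j|\,\psi^{(1)}(j+1)
\ge
\frac{\log j}{2(j+1)}.
\]
This is neither \(O(e^{-c\sqrt{\log j}}\log j/j)\) nor \(o(j^{-1})\). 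So the clause is false as stated (in its \(O\)-form and at \(\alpha=1\)), not merely unproved.

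This agrees with Remark~\ref{rem:num_check}. There the observed difference \(1.68\times10^{-3}\) at \(K=10^7\) is attributed to \(\Lambda_K/K\), and it exceeds \(\log K/K\approx1.6\times10^{-6}\) by three orders of magnitude.

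One minor precision for your last bullet: for \(R_K\), the ``not for \(\alpha>1\)'' part is a statement about the majorant \(e^{-c\sqrt{\log K}}\log K/K\), not about \(R_K\) itself.
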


\begin{proof}
	By exchanging the order of summation in \eqref{eq:F_K},
	\[
	\cF(K)
	=
	\sum_{n=1}^{K}(-1)^{\omega(n)}\log n
	\sum_{k=\max(2,n)}^{K}\frac{1}{k^2}.
	\]
	On the other hand, by definition of the trigamma function,
	\[
	\cF_{\mathrm{Tri}}(K)
	=
	\sum_{n=1}^{K}(-1)^{\omega(n)}\log n
	\sum_{k=\max(2,n)}^{\infty}\frac{1}{k^2}.
	\]
	Subtracting, we obtain
	\[
	\cF_{\mathrm{Tri}}(K)-\cF(K)
	=
	\sum_{n=1}^{K}(-1)^{\omega(n)}\log n
	\sum_{k=K+1}^{\infty}\frac{1}{k^2}
	=
	\Lambda_K
	\sum_{k=K+1}^{\infty}\frac{1}{k^2}.
	\]
	Using the standard asymptotic expansion
	\[
	\sum_{k=K+1}^{\infty}\frac{1}{k^2}
	=
	\frac{1}{K}+O\!\left(\frac{1}{K^2}\right),
	\]
	we obtain
	\[
	\cF_{\mathrm{Tri}}(K)-\cF(K)
	=
	\frac{\Lambda_K}{K}
	+
	O\!\left(\frac{|\Lambda_K|}{K^2}\right).
	\]
	
	It remains to bound \(|\Lambda_K|\).  By the effective bound
	\eqref{eq:Tomega_effective} and Abel summation,
	\[
	|\Lambda_K|
	\le
	C'\,K\,e^{-c\sqrt{\log K}}\log K
	\]
	for absolute constants \(C',c>0\).  Substituting into the
	expression above gives
	\[
	\bigl|\cF_{\mathrm{Tri}}(K)-\cF(K)\bigr|
	\le
	C\,\frac{e^{-c\sqrt{\log K}}\log K}{K},
	\]
	which is the claimed bound \eqref{eq:error_bound}.
	
	Finally, to justify the statement on the decay rate: for
	\(0<\alpha\le1\), the ratio
	\[
	\frac{e^{-c\sqrt{\log K}}\log K/K}{K^{-\alpha}}
	=
	K^{\alpha-1}\,e^{-c\sqrt{\log K}}\log K
	\]
	tends to \(0\) as \(K\to\infty\), because the exponential factor
	\(e^{-c\sqrt{\log K}}\) decays faster than any power of
	\(\log K\), while \(K^{\alpha-1}\to0\) or \(K^{\alpha-1}=1\) for
	\(\alpha<1\) or \(\alpha=1\), respectively.  For \(\alpha>1\),
	the ratio equals \(K^{\alpha-1}e^{-c\sqrt{\log K}}\log K\), which
	tends to \(+\infty\) since \(K^{\alpha-1}\) grows polynomially
	while the exponential factor is only subpolynomial.  This proves
	the stated range of validity.
\end{proof}

\begin{remark}\label{rem:num_check}
	At \(K=10^7\), the observed difference between the two methods is
	of order \(1.68\times10^{-3}\).  The theoretical bound
	\eqref{eq:error_bound} involves the implicit constant \(c\) of the
	de la Vall\'ee Poussin zero-free region, which is not explicit in
	the literature; for the range of \(K\) considered, the discrepancy
	is dominated by the prefactor \(\Lambda_K/K\).  The observed value
	is fully consistent with the bound, and the agreement between the
	two methods provides an independent numerical verification that
	the value \(\cF\approx-0.2573\) is not an artifact of truncation
	or floating-point accumulation.
\end{remark}

\subsection{The regularized value}

Combining the numerical evidence of Table~\ref{tab:numerics} with the
error analysis of Proposition~\ref{prop:error_analysis}, we obtain the
regularized value
\begin{equation}\label{eq:P_ren_final}
	P_{\omega,\mathrm{ren}}
	=
	\exp(\cF)
	\approx
	0.7731.
\end{equation}
This value is canonical in the sense of
Definition~\ref{def:regularized_value}: it depends only on the class
\(\mathbf{w}_{\omega}\in\widetilde{\Si}^*\) of the sign sequence, not
on the particular net or truncation scheme used to compute it.

\begin{remark}[A new arithmetic constant]
	The value \(P_{\omega,\mathrm{ren}}\approx 0.7731\) is a new
	constant of arithmetic origin.  It cannot be obtained by zeta
	regularization (Theorem~\ref{thm:meretov}), nor by any method
	requiring meromorphic continuation to \(s=0\); it is obtained
	instead by the canonical prescription of the monoid framework.
	Its existence and well-definedness are guaranteed by
	Proposition~\ref{prop:convergence} and the axioms of \cite{C1}.
	The companion analytical identity
	\(D_{\omega}''(1)=8C_2\ln 2\)
	(Proposition~\ref{prop:D_second_derivative}) shows that the same
	arithmetic structure that governs the regularized value also
	governs the local curvature of \(D_\omega(s)\) at \(s=1\).  No
	quantitative identity linking the two constants is claimed.
\end{remark}

\section{Discussion}
\label{sec:discussion}

The product \(P_{\omega}=\prod n^{(-1)^{\omega(n)}}\) provides a
stringent test for the generalized monoid framework because it combines
two features that are absent from all previously treated examples:

\begin{enumerate}[label=(\roman*)]
	\item \textbf{Natural boundary.}  The Dirichlet series
	\(D_{\omega}(s)\) has a natural boundary at \(\Re(s)=0\)
	(Theorem~\ref{thm:meretov}), which makes zeta regularization
	impossible.  This is a stronger obstruction than the mere absence
	of a meromorphic continuation: it is a proven structural
	impossibility, rooted in the classical Estermann--Dahlquist theory
	of Euler products.
	\item \textbf{Polynomial oscillation.}  The summatory function
	\(T_{\omega}(N)\) decays as \(O(Ne^{-c\sqrt{\log N}})\), so the
	divergence of the log-partial sums is at most linear times
	logarithmic, rather than purely logarithmic.  This means that the
	principal part cannot be captured by a simple monomial
	\(t^{\gamma}\log^{\beta}t\) with constant coefficients.
\end{enumerate}

Despite these obstructions, the weighted evaluation functional
\eqref{eq:Fomega} converges absolutely, as established in
Proposition~\ref{prop:convergence}, and the numerical evidence of
Section~\ref{sec:numerical} supports a finite limit.  This demonstrates
that the monoid framework is applicable to divergence mechanisms that
lie beyond the scope of classical regularization.

\subsection{Comparison with classical regularization methods}

Classical methods for regularizing divergent products include zeta
regularization, Abel summation, Ramanujan summation, and heat-kernel
regularization.  All of these methods rely, at some level, on the
meromorphic continuation of the associated Dirichlet series to a
neighbourhood of the point where the regularized value is extracted.
For the product \(P_{\omega}\), the relevant point is \(s=0\), and the
natural boundary at \(\Re(s)=0\) makes the derivative
\(D_{\omega}'(0)\) non-existent.  Consequently, none of these methods
can be applied.

The generalized monoid framework circumvents the need for analytic
continuation by operating directly on the asymptotic prefix profile of
the sign sequence.  The natural boundary is an obstruction to analytic
continuation, not to the combinatorial encoding of the sequence.  This
is the structural reason why the monoid succeeds where classical
methods fail.

\subsection{Comparison with the Liouville product}

It is instructive to compare \(P_{\omega}\) with the Liouville product
\[
P_{\lambda}=\prod_{n=1}^{\infty}n^{\lambda(n)},
\qquad
\lambda(n)=(-1)^{\Omega(n)},
\]
where \(\Omega(n)\) is the total number of prime factors of \(n\)
counted with multiplicity.  The associated Dirichlet series is
\[
L(s)=\sum_{n=1}^{\infty}\frac{\lambda(n)}{n^{s}}
=
\frac{\zeta(2s)}{\zeta(s)},
\]
which is meromorphic on the whole complex plane.  Zeta regularization
gives
\[
P_{\lambda,\mathrm{ren}}=\frac{1}{2\pi}.
\]
The contrast with \(P_{\omega}\) is striking: the replacement of
\(\Omega(n)\) by \(\omega(n)\) changes the analytic properties of the
Dirichlet series from meromorphic on \(\mathbb{C}\) to having a natural
boundary at \(\Re(s)=0\).  This illustrates how sensitive the classical
method is to the arithmetic structure of the sign sequence.

\subsection{Comparison with the Thue--Morse product}

Another illuminating comparison is with the Thue--Morse product
\[
P_{\mathrm{TM}}=\prod_{n=1}^{\infty}n^{t_n},
\]
where \(t_n\in\{0,1\}\) is the Thue--Morse sequence.  The Dirichlet
series of \((-1)^{t_n}\) also has a natural boundary at \(\Re(s)=0\),
but its summatory function is \emph{bounded}.  In the taxonomy of
\cite{C1}, the Thue--Morse product is Type~III (logarithmic
divergence), while \(P_{\omega}\) is Type~IV (sublinear power-law
divergence).  The latter is the most severe divergence type treated so
far by the monoid framework.

\subsection{The new constant and its arithmetic significance}

The regularized value
\[
P_{\omega,\mathrm{ren}}\approx 0.7731
\]
is a new constant of arithmetic origin.  It cannot be obtained by zeta
regularization, nor by any method requiring meromorphic continuation
to \(s=0\); it is obtained instead by the canonical prescription of
the monoid framework.  Its existence and well-definedness are
guaranteed by Proposition~\ref{prop:convergence} and the axioms of
\cite{C1,C2}.

The companion analytical identity of
Proposition~\ref{prop:D_second_derivative}
(\(D_{\omega}''(1)=8C_2\ln 2\)) shows that the same arithmetic
structure that governs the regularized value also governs the local
curvature of \(D_{\omega}(s)\), and in particular involves the
Hardy--Littlewood twin prime constant \(C_2\).  This suggests that
\(P_{\omega,\mathrm{ren}}\) is not merely a numerical curiosity, but
a genuine arithmetic invariant of the sequence.  No quantitative
identity linking \(P_{\omega,\mathrm{ren}}\) to
\(D_{\omega}''(1)\) is claimed.

\subsection{Unconditional stability of the regularized value}

The evaluation functional \(\cF\) converges absolutely, and this
convergence is unconditional
(Proposition~\ref{prop:convergence}).  The key input is the classical
zero-free region of \(\zeta(s)\) (de la Vall\'ee Poussin), which
yields the effective bound
\[
T_{\omega}(x)=O\!\left(x\,e^{-c\sqrt{\log x}}\right)
\]
for some absolute constant \(c>0\).  This bound is stronger than the
Prime Number Theorem estimate \(T_{\omega}(x)=o(x)\) and is sufficient
to guarantee that the series
\[
\cF=\sum_{k=2}^{\infty}\frac{\Lambda_k}{k^2}
\]
converges absolutely.

Absolute convergence has a direct conceptual consequence: the
regularized value \(P_{\omega,\mathrm{ren}}=\exp(\cF)\) is invariant
under reordering of the terms and under any change of truncation
scheme.  In particular, the numerical value
\(P_{\omega,\mathrm{ren}}\approx 0.7731\) obtained in
Section~\ref{sec:numerical} is not an artifact of the particular
method used to compute it: it is a canonical invariant of the sign
sequence \((-1)^{\omega(n)}\).

We emphasize that this result does \emph{not} invoke, and does not
constitute, any statement about the Riemann Hypothesis.  The
convergence of \(\cF\) follows from the unconditional zero-free
region of \(\zeta(s)\), which was established by de la Vall\'ee
Poussin in 1899 and is independent of RH.  The connection between the
monoid framework and the distribution of the zeros of \(\zeta(s)\) is
structural rather than logical: the natural boundary of
\(D_{\omega}(s)\) is populated by the zeros of the local factors
\(h(p^{-s})=(1-2p^{-s})/(1-p^{-s})\), whose accumulation on the
imaginary axis reflects the same arithmetic that governs the
distribution of the zeros of \(\zeta(s)\) in the critical strip.  But
the regularized value \(P_{\omega,\mathrm{ren}}\) itself is a stable,
unconditional invariant of the sequence, independent of any unproven
hypothesis.

\subsection{Limitations and future work}

Several directions for future work suggest themselves:

\begin{enumerate}[label=(\roman*)]
	\item \textbf{Rigorous error bounds.}  The numerical computation of
	\(\cF\) in Section~\ref{sec:numerical} uses a truncation at
	\(K=10^7\).  A rigorous estimate of the error
	\(|\cF(K)-\cF|\) would strengthen the numerical evidence.  This
	could be approached using the theory of Dirichlet series with
	natural boundaries, or via explicit bounds on the summatory
	function \(T_{\omega}(x)\).
	\item \textbf{Extension to other arithmetic functions.}  The monoid
	framework is applicable to any arithmetic product whose sign
	sequence has a controlled prefix profile.  Natural candidates
	include restrictions of \((-1)^{\Omega(n)}\), functions related to
	the distribution of primes in arithmetic progressions, and
	generalizations involving higher-order divisor functions.
	\item \textbf{Systematic comparison with classical methods.}  A
	systematic comparison between monoid regularization and classical
	methods (where they exist) would help identify the precise class of
	sequences for which the monoid provides new values.  This could
	lead to a classification of divergent products by their
	regularization properties.
	\item \textbf{Further structural analysis of the companion identity.}
	The companion analytical identity \(D_{\omega}''(1)=8C_2\ln 2\)
	establishes that the same arithmetic structure governs both the
	regularized value and the local curvature of \(D_\omega(s)\) at
	\(s=1\).  A deeper structural analysis of this parallel — for
	instance, through operators on a Hilbert space of words — could
	reveal new invariants of arithmetic sequences.  We emphasize that
	no quantitative identity between \(\cF\) and \(D_\omega''(1)\) is
	currently available; establishing such an identity, if it exists,
	remains an open problem.
\end{enumerate}

\subsection*{Concluding remarks}

We have applied the generalized monoid framework of \cite{C1,C2} to the
divergent product \(P_{\omega}=\prod n^{(-1)^{\omega(n)}}\).  The
associated Dirichlet series has a natural boundary at \(\Re(s)=0\),
which makes zeta regularization and all related methods inapplicable.
The monoid framework, being purely asymptotic and algebraic, provides a
well-defined regularized value:
\[
P_{\omega,\mathrm{ren}}\approx 0.7731.
\]
This is a new constant of arithmetic origin, obtained by the canonical
prescription of the monoid framework rather than by any method
requiring meromorphic continuation.

The example demonstrates the scope of the monoid framework: it applies
to any arithmetic product whose sign sequence has a controlled prefix
profile.  The natural boundary obstruction is irrelevant to the
algebraic construction, which operates directly on the asymptotic
structure of the word.  The companion analytical identity
\(D_{\omega}''(1)=8C_2\ln 2\) shows that the regularized value is not
a numerical artifact, but a genuine arithmetic invariant of the
sequence \((-1)^{\omega(n)}\).

\appendix

\section{Natural boundary of \(D_{\omega}(s)\) via the
	Estermann--Dahlquist theorem}
\label{app:natural_boundary}

In this appendix we prove that the Dirichlet series
\[
D_{\omega}(s)=\sum_{n=1}^{\infty}\frac{(-1)^{\omega(n)}}{n^{s}},
\qquad
\omega(n)=\#\{p\text{ prime}:p\mid n\},
\]
has a natural boundary at \(\Re(s)=0\).  The proof is a direct
application of the classical Estermann--Dahlquist theory of Euler
products \cite{Estermann1928,Dahlquist1952}, which we state below in
the form needed here.  This recovers, in a self-contained way, the
result of Meretov \cite{Meretov1987} that rules out zeta
regularization for the product \(P_{\omega}\).

\subsection{The Estermann--Dahlquist theorem}

We begin by recalling the general framework for Euler products of
Dirichlet series.  Let \(h\) be a function defined on the closed unit
disk \(\overline{\DD}=\{z\in\CC:|z|\le1\}\), analytic in the open disk
\(\DD\), with at most isolated singularities on the unit circle
\(\partial\DD\).  Assume moreover that \(h(0)=1\) and that the Taylor
coefficients of \(h\) at \(0\) are rational integers (in the terminology
of Estermann, \(h\) is a \emph{ganzwertige} function).  Consider the
Euler product
\begin{equation}\label{eq:euler_general}
	D_h(s)
	=
	\prod_{p}h(p^{-s}),
	\qquad
	\Re(s)>1,
\end{equation}
which converges absolutely in the half-plane \(\Re(s)>1\) because
\(h(p^{-s})=1+O(p^{-\sigma})\) for \(\sigma=\Re(s)>1\).

\begin{theorem}[Estermann, 1928; Dahlquist, 1952]
	\label{thm:estermann_dahlquist}
	Let \(h\) be as above.  Then:
	\begin{enumerate}[label=(\roman*)]
		\item The Euler product \(D_h(s)\) admits a meromorphic
		continuation to the half-plane \(\Re(s)>0\).  Its poles in
		this half-plane are contained in the set of points \(s\) such
		that \(h(p^{-s})\) has a pole for some prime \(p\)
		(cancellations between different local factors may occur).
		Its zeros in this half-plane include the points \(s\) such
		that \(h(p^{-s})\) vanishes for some prime \(p\) and no
		other local factor has a pole at the same point \(s\).
		\item If \(h\) is a product of cyclotomic polynomials (up to a
		non-vanishing analytic factor), then \(D_h(s)\) extends
		meromorphically to the whole complex plane.
		\item Conversely, if \(h\) is \emph{not} a product of
		cyclotomic polynomials, then the line \(\Re(s)=0\) is a
		natural boundary for \(D_h(s)\): no analytic continuation
		across any arc of this line is possible.
	\end{enumerate}
\end{theorem}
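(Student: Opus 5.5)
The plan follows Estermann's factorisation method, in Dahlquist's extension to functions with isolated singularities on \(\partial\DD\). The idea is to expand \(h\) as a formal product of cyclotomic-type factors \(\prod_{m\ge1}(1-z^m)^{-c_m}\) and to compare \(D_h(s)\) with a finite product of shifted zeta functions \(\zeta(ms)^{c_m}\).

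\emph{Step 1 (integral exponents).} Since \(h(0)=1\) and the Taylor coefficients of \(h\) are rational integers, \(\log h(z)=\sum_{k\ge1}a_kz^k\) near \(0\), and Möbius inversion of
\[
a_k=\sum_{m\mid k}\frac{m\,c_m}{k}
\]
determines the exponents \(c_m\) uniquely. A standard induction on \(m\) (compare coefficients of \(z^m\)) shows \(c_m\in\mathbb{Z}\), and also that
\[
h(z)\prod_{m\le M}(1-z^m)^{c_m}=1+O(z^{M+1}).
\]

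\emph{Step 2 (part (i)).} Fix \(\sigma_0>0\) and choose \(M\) with \((M+1)\sigma_0>1\). Choose \(P_0\) so large that for \(p>P_0\) and \(\Re s\ge\sigma_0\) the point \(p^{-s}\) lies in a disk \(|z|\le r\). Here \(r\) must be smaller than the modulus of every zero and singularity of \(h\) inside \(\DD\); such an \(r\) exists because these are isolated. On this disk \(\log h\) is analytic, so \(|c_m|\ll_r r^{-m}\) and the remainder in Step 1 is \(O_M(|z|^{M+1})\) uniformly. Then
\[
D_h(s)=\prod_{p\le P_0}h(p^{-s})\;\prod_{m\le M}\Bigl(\zeta(ms)\prod_{p\le P_0}(1-p^{-ms})\Bigr)^{c_m}\;R_M(s),
\]
where \(R_M(s)=\prod_{p>P_0}(1+O(p^{-(M+1)\sigma}))\) is analytic and non-vanishing for \(\Re s>\sigma_0\). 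Each of the other factors is meromorphic in \(\Re s>0\). This gives the meromorphic continuation to \(\Re s>\sigma_0\), and letting \(\sigma_0\to0\) gives it on all of \(\Re s>0\). The localisation of poles and zeros in (i) is read off from this formula. Only the finitely many factors \(h(p^{-s})\) with \(p\le P_0\) and the factors \(\zeta(ms)^{c_m}\) can contribute. In \(\Re s\ge\sigma_0\), the zeta factors produce only the pole of \(\zeta(ms)\) at \(s=1/m\) and zeros of \(\zeta(ms)\) with real part \(<1/m\).

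\emph{Step 3 (part (ii)).} If \(h\) is a finite product of cyclotomic polynomials times a non-vanishing factor, only finitely many \(c_m\) are nonzero. In that case \(R_M\equiv\) const for large \(M\), so \(D_h\) is a finite product of \(\zeta(ms)^{c_m}\) and is meromorphic on \(\CC\).

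\emph{Step 4 (part (iii), the main obstacle).} If \(h\) is not of cyclotomic type, I will show that every point \(it_0\) is a limit of zeros or poles of the continuation. There are two sources for these.

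\begin{enumerate}[label=(\alph*)]
\item If \(h\) has a zero \(z_0\) with \(0<|z_0|<1\) (as for \(h(z)=(1-2z)/(1-z)\) in our case, with \(z_0=1/2\)), then the points
\[
s=\frac{-\log z_0+2\pi i k}{\log p}
\]
solve \(h(p^{-s})=0\). Their real parts tend to \(0\) as \(p\to\infty\), and their vertical spacing \(2\pi/\log p\) tends to \(0\), so they accumulate at every \(it_0\).
\item If instead \(h\) has no zeros in \(\DD\setminus\{0\}\), then Kronecker's theorem forces \(c_m\ne0\) for infinitely many \(m\). The zeros \(\rho/m\) of \(\zeta(ms)\) then accumulate on the whole imaginary axis.
\end{enumerate}

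The difficulty in both cases is to rule out cancellation against poles or zeros coming from other factors. I would try a counting argument first. In a small box near \(it_0\), use the decomposition of Step 2 with \(P_0\) chosen just below the relevant prime, so that the candidate zero comes from one explicit local factor \(h(p^{-s})\). I would then show that the remaining factors are analytic and non-vanishing at that point. This uses the absolute convergence of \(R_M\), together with the fact that distinct primes give distinct \(\log z_0/\log p\) by unique factorisation, and that zeta zeros \(\rho/m\) coincide with such points only for a sparse set. In case (b) this would be replaced by Dahlquist's zero-density comparison: a lower bound \(\gg T\log T/m\) for the zeros of \(\zeta(ms)\) in a box, set against upper bounds for the other factors. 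A meromorphic function near \(it_0\) cannot have infinitely many zeros accumulating at \(it_0\) unless it vanishes identically, which is impossible here. Hence no continuation across any arc of \(\Re s=0\) exists.
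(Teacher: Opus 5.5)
\paragraph{The gap: non-cancellation in part (iii).} Steps 1--3 are the standard Estermann set-up and do give continuation to $\Re s>0$. Step 4, however, leaves open exactly the point you flag, and the way you propose to close it would fail.

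Zeros of other local factors cannot cancel a zero $s_{p,k}$ of $h(p^{-s})$, and poles of local factors lie on $\Re s=0$, so unique factorisation is irrelevant. Off the real axis, the only possible cancellation is a pole of $\zeta(ms)^{c_m}$ with $c_m<0$, i.e.\ a nontrivial zero $\rho=ms_{p,k}$ of $\zeta$ with $m\le M$. Excluding this point by point would need information on where individual zeros of $\zeta$ are (under RH it is automatic unless $\Re(ms_{p,k})=\tfrac12$). So the pointwise plan (``the remaining factors are non-vanishing at that point'') cannot be carried out unconditionally, and ``coincide only for a sparse set'' is the whole content of (iii), not a step toward it.

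What works is counting with multiplicity in a box $B=\{\delta<\sigma<3\delta,\ |t-t_0|<\eta\}$ with $t_0\neq 0$, $M=\lfloor 1/\delta\rfloor$ and $P_0$ large. Let $A=1/|z_0|$ for a zero $z_0$ of minimal modulus. The primes $A^{1/(3\delta)}<p<A^{1/\delta}$ give at least $(\eta/\pi+o(1))A^{1/\delta}$ local zeros in $B$. The possible cancelling poles number at most $\sum_{m\le M}|c_m|\,\#\{\rho:\rho/m\in B\}$. Showing the zeros win needs two inputs missing from your sketch:
\begin{enumerate}
\item The sharp bound $|c_m|\ll A^m/m$. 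It follows from $\sum_{d\mid m}dc_d=\sum_j\beta_j^m-\sum_i\alpha_i^m$, where $\alpha_i,\beta_j$ are the reciprocals of the zeros and poles of $h$. Your bound $|c_m|\ll r^{-m}$ with $r<|z_0|$ loses a factor exponential in $1/\delta$, which is fatal here.
\item Information on zeros of $\zeta$ near $\Re s=1$. Riemann--von Mangoldt alone bounds the poles only by a quantity of order $\eta A^{1/\delta}\log(1/\delta)$, which does not beat the zero count.
\end{enumerate}
The de la Vall\'ee Poussin zero-free region restricts the relevant $m$ to $m\le\delta^{-1}-c/(\delta\log(1/\delta))$. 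This saves a factor $A^{-c/(\delta\log(1/\delta))}$. Then $D_h$ has zeros in all such boxes with $\delta$ small, and the identity theorem finishes.

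\paragraph{Further corrections.} Your Step 2 formula does not let you ``read off'' the localisation in (i); it refutes it. Zeros of $\zeta(ms)$ with $c_m<0$ are poles of $D_h$ at points where no local factor has a pole. For $h(z)=1-z$ one gets $D_h=1/\zeta$. For the paper's $h(z)=(1-2z)/(1-z)$, all $c_m\le -1$ in your normalisation, so $D_h$ has a pole at $\rho\approx\tfrac12+14.13i$. The pole clause of (i) must therefore include the points $\rho/m$, and its zero clause is precisely the cancellation problem above.

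Case (b) is vacuous. By P\'olya--Carlson and Fatou the hypotheses force $h$ to be rational with cyclotomic denominator. Kronecker's theorem then shows that a non-cyclotomic $h$ \emph{must} have a zero in $0<|z|<1$, so your use of Kronecker there is inverted. In any case, lower bounds for zeros of $\zeta(ms)$ in boxes off the critical line are not available.

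Also, $P_0$ must be at least $p$, not just below it. Otherwise the zero $s_{p,k}$ sits inside $R_M$, which you have declared non-vanishing.

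The paper does not fill these gaps: it quotes the theorem from Estermann and Dahlquist. Its Appendix sketch is your case (a) with the cancellation issue not addressed at all.
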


\begin{remark}
	The dichotomy in Theorem~\ref{thm:estermann_dahlquist} is the
	content of the Estermann--Dahlquist theory.  The key point is that
	a zero of \(h\) at a point \(z_0\in\overline{\DD}\) that is not a
	root of unity produces a family of zeros of the local factors
	\(h(p^{-s})\) which accumulate on the line \(\Re(s)=0\) and
	cannot be cancelled by the poles of other factors.  This prevents
	the meromorphic continuation beyond \(\Re(s)=0\).  We make the
	mechanism explicit in
	Subsection~\ref{subsec:accumulation_appendix} below.
\end{remark}

\subsection{Application to \(h(z)=(1-2z)/(1-z)\)}

We now apply Theorem~\ref{thm:estermann_dahlquist} to our situation.

\begin{lemma}\label{lem:h_analytic}
	The function
	\[
	h(z)=\frac{1-2z}{1-z}
	\]
	is analytic in the open unit disk \(\DD\), has a simple
	pole at \(z=1\) on the unit circle, and has a simple zero at
	\(z=1/2\) inside \(\DD\).  Its Taylor coefficients at
	\(z=0\) are rational integers.
\end{lemma}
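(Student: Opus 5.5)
The plan is to verify the three assertions of Lemma~\ref{lem:h_analytic} directly from the closed form \(h(z)=(1-2z)/(1-z)\). Each is an elementary property of a rational function, so no earlier result of the paper is needed beyond the definition of \(h\).

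\emph{Analyticity and the pole.} The function \(h\) is a quotient of two polynomials, so it is analytic wherever the denominator \(1-z\) is nonzero. The only zero of the denominator is \(z=1\), and \(1\notin\DD\), so \(h\) is analytic on \(\DD\). At \(z=1\) the numerator equals \(1-2=-1\neq0\). Since the denominator has a simple zero there, \(z=1\) is a simple pole, lying on \(\partial\DD\), with residue
\[
\lim_{z\to1}(z-1)h(z)=1.
\]
Together with analyticity, this shows that \(h\) has at most isolated singularities on the unit circle, as required in Theorem~\ref{thm:estermann_dahlquist}.

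\emph{The zero.} The numerator \(1-2z\) vanishes only at \(z=1/2\), which satisfies \(|1/2|<1\). It is a simple zero of the numerator, and the denominator equals \(1/2\neq0\) there, so \(z=1/2\) is a simple zero of \(h\) inside \(\DD\). We also record that \(1/2\) is not a root of unity, which is the point used later in Theorem~\ref{thm:estermann_dahlquist}(iii).

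\emph{Integer Taylor coefficients.} For \(|z|<1\), expand the denominator as a geometric series:
\[
h(z)=(1-2z)\sum_{k\ge0}z^k=\sum_{k\ge0}z^k-2\sum_{k\ge0}z^{k+1}=1-\sum_{k\ge1}z^k .
\]
Thus \(h(0)=1\) and every higher Taylor coefficient equals \(-1\), so all coefficients are rational integers. This matches the local-factor expansion already displayed in the arithmetic-interpretation remark, and it gives the normalization \(h(0)=1\) needed in \eqref{eq:euler_general}.

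There is no real obstacle. The only care needed is bookkeeping: keep track of which point lies on the circle (\(z=1\)) and which lies inside the disk (\(z=1/2\)), and state explicitly the normalization \(h(0)=1\) required by the Estermann setup.
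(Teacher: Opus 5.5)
Your proof is correct and follows essentially the same route as the paper. Both read off the pole at \(z=1\) and the zero at \(z=1/2\) from the rational form, and both expand \(h(z)=(1-2z)\sum_{k\ge0}z^k=1-\sum_{k\ge1}z^k\) to get integer Taylor coefficients. You are slightly more careful than the paper: you check that the numerator is nonzero at \(z=1\) and the denominator is nonzero at \(z=1/2\), which is what actually justifies calling the pole and the zero simple.
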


\begin{proof}
	Writing
	\[
	h(z)=\frac{1-2z}{1-z}
	=
	(1-2z)\sum_{k=0}^{\infty}z^{k}
	=
	1-\sum_{k=1}^{\infty}z^{k},
	\]
	we see that \(h\) is a rational function with a simple pole at
	\(z=1\) and a simple zero at \(z=1/2\).  The Taylor coefficients
	are \(h(0)=1\) and \(h^{(k)}(0)/k!=-1\) for all \(k\ge1\), which
	are rational integers.
\end{proof}

\begin{lemma}\label{lem:h_not_cyclotomic}
	The function \(h(z)=(1-2z)/(1-z)\) is \emph{not} a product of
	cyclotomic polynomials.
\end{lemma}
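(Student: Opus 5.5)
The natural criterion is the location of zeros. Every cyclotomic polynomial \(\Phi_m\) has all of its zeros on the unit circle, since they are the primitive \(m\)-th roots of unity. So any finite product \(\prod_m \Phi_m(z)^{e_m}\) with integer exponents \(e_m\in\mathbb{Z}\) (allowing negative exponents, so that quotients are covered) is a rational function whose zeros and poles all lie on \(\partial\DD\). I will show that \(h\) violates this, and then deal with the qualifier ``up to a non-vanishing analytic factor'' in part (ii) of Theorem~\ref{thm:estermann_dahlquist}.

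\textbf{Step 1.} Suppose \(h(z)=g(z)\prod_m\Phi_m(z)^{e_m}\), where \(g\) is analytic and non-vanishing on \(\DD\). Evaluate both sides at \(z=1/2\):
\begin{itemize}
\item By Lemma~\ref{lem:h_analytic}, \(h(1/2)=0\).
\item The cyclotomic product has no zero or pole in the open disk \(\DD\), because \(\Phi_m(1/2)\neq0\) for every \(m\) (all its roots have modulus \(1\)).
\item \(g(1/2)\neq0\) by hypothesis.
\end{itemize}
So the right-hand side is nonzero at \(z=1/2\), a contradiction. This is exactly the mechanism in the remark after Theorem~\ref{thm:estermann_dahlquist}: \(h\) has a zero \(z_0=1/2\) that is not a root of unity.

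\textbf{Step 2.} The main obstacle is that ``non-vanishing analytic factor'' must be read in the sense intended by Estermann--Dahlquist. If \(g\) were only required to be non-vanishing near the boundary, Step~1 alone would not suffice. In the classical formulation for integer-coefficient \(h\), however, \(h\) is compared with a product \(\prod_{m}(1-z^m)^{c_m}\) in the strict sense, with only finitely many \(c_m\neq0\). This is the case in which \(D_h\) is a finite product of \(\zeta(ms)^{\pm1}\).

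I would therefore also give a direct coefficient argument. Take logarithms:
\[
\log h(z)=\log(1-2z)-\log(1-z)=-\sum_{k\ge1}\frac{2^k-1}{k}z^k .
\]
For a finite product \(\prod_m(1-z^m)^{c_m}\), the coefficients of \(z\,\frac{d}{dz}\log\) are \(-\sum_{m\mid k} m c_m\). Since only finitely many \(c_m\) are nonzero, these coefficients are bounded in \(k\). For \(h\), the corresponding coefficients are \(-(2^k-1)\), which are unbounded. Hence \(h\) cannot be such a product, and neither version of the definition is satisfied.

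\textbf{Step 3.} Conclude that \(h\) is not cyclotomic. Together with Lemma~\ref{lem:h_analytic}, this places \(h\) under part (iii) of Theorem~\ref{thm:estermann_dahlquist}, which gives the natural boundary at \(\Re(s)=0\).
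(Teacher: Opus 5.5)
Your proof is correct. Step 1 is the paper's own argument: the zero of \(h\) at \(z=1/2\) is not a root of unity, so no product of cyclotomic polynomials can produce it. You carry it out more carefully than the paper does, by allowing negative exponents and a factor non-vanishing on \(\DD\).

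Step 2, which the paper does not have, is a valid supplementary check. The unbounded coefficients \(2^k-1\) of \(-zh'/h\) are equivalent to saying that infinitely many exponents \(c_m\) in the paper's expansion \(D_\omega(s)=\prod_m\zeta(ms)^{-c_m}\) are nonzero. This settles the strict (finite-product) reading of ``cyclotomic''.
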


\begin{proof}
	A product of cyclotomic polynomials is a polynomial whose zeros
	are all roots of unity.  But \(h\) has a zero at \(z=1/2\), which
	is not a root of unity.  Hence \(h\) cannot be a product of
	cyclotomic polynomials.  Equivalently, the only cyclotomic factor
	of \(h\) is \(1-z\), which corresponds to the pole at \(z=1\); the
	zero at \(z=1/2\) has no cyclotomic interpretation.
\end{proof}

\begin{theorem}\label{thm:natural_boundary_appendix}
	The Dirichlet series
	\[
	D_{\omega}(s)
	=
	\sum_{n=1}^{\infty}\frac{(-1)^{\omega(n)}}{n^{s}}
	=
	\prod_{p}\frac{1-2p^{-s}}{1-p^{-s}},
	\qquad
	\Re(s)>1,
	\]
	admits a meromorphic continuation to the half-plane
	\(\Re(s)>0\).  In this half-plane, the only possible singularity
	of \(D_{\omega}(s)\) is at \(s=1\), where the simple pole of the
	factor \(\zeta(s)\) is cancelled by the simple zero of the local
	factor \(1-2^{1-s}\).  Consequently, \(s=1\) is a removable
	singularity of \(D_{\omega}\), and after removal of this point
	\(D_{\omega}\) is in fact analytic in \(\Re(s)>0\).  The line
	\(\Re(s)=0\) is a natural boundary: no analytic continuation
	across any arc of this line is possible.
\end{theorem}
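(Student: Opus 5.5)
The plan is to obtain every assertion of Theorem~\ref{thm:natural_boundary_appendix} from Theorem~\ref{thm:estermann_dahlquist}, applied to $h(z)=(1-2z)/(1-z)$. Lemma~\ref{lem:h_analytic} checks the hypotheses: $h$ is analytic in $\DD$, its only singularity on $\partial\DD$ is the simple pole at $z=1$, $h(0)=1$, and its Taylor coefficients are rational integers. Since $D_\omega(s)=\prod_p h(p^{-s})$ for $\Re(s)>1$ by \eqref{eq:euler_correct}, part~(i) of Theorem~\ref{thm:estermann_dahlquist} gives the meromorphic continuation of $D_\omega$ to $\Re(s)>0$. Part~(iii), combined with Lemma~\ref{lem:h_not_cyclotomic}, gives that $\Re(s)=0$ is a natural boundary. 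So the boundary statement and the existence of the continuation are immediate. The remaining work is to locate the singularities inside $\Re(s)>0$.

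For that, I will use the pole-location clause of Theorem~\ref{thm:estermann_dahlquist}(i). The poles of $D_\omega$ in $\Re(s)>0$ lie among the points where some local factor $h(p^{-s})$ has a pole, which happens exactly when $p^{-s}=1$. These points are
\[
s=\frac{2\pi i k}{\log p},\qquad k\in\mathbb{Z}.
\]
All of them lie on the line $\Re(s)=0$, so none lies in the open half-plane. The one remaining point to examine is $s=1$. There the global representation $D_\omega(s)=\zeta(s)\prod_p(1-2p^{-s})$ exhibits the simple pole of $\zeta$. I will isolate the factor at $p=2$, exactly as in the proof of Lemma~\ref{lem:D_zero_order}. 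That factor contributes $1-2^{1-s}$, which has a simple zero at $s=1$. The pole and the zero cancel, so $(s-1)D_\omega(s)$ vanishes at $s=1$ and $D_\omega$ stays bounded near this point. Riemann's removable singularity theorem then shows that $s=1$ is removable. After removing it, $D_\omega$ is analytic on all of $\Re(s)>0$. This matches the behaviour $D_\omega(1)=D_\omega'(1)=0$ recorded in Lemma~\ref{lem:D_zero_order}.

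The main obstacle is justifying that nothing else in $\Re(s)>0$ produces a singularity. In the standard Estermann construction, one writes $h(z)=\prod_{k\ge1}(1-z^k)^{b_k}$ and continues $\prod_p h(p^{-s})$ by the partial products $\prod_{k\le K}\zeta(ks)^{-b_k}$, where $K$ grows as $\Re(s)$ decreases. A priori these quotients could introduce poles at zeros of the $\zeta(ks)$. I will rely on the statement of Theorem~\ref{thm:estermann_dahlquist}(i) exactly as given in the paper: poles occur only at poles of the local factors, up to cancellation. This reduces the question to the local analysis above.

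If I had to verify this directly, I would first treat the strip $\Re(s)>1/2$ via $G(s)$ from the proof of Proposition~\ref{prop:D_second_derivative}. That function is absolutely convergent and nonvanishing there. I would then proceed strip by strip, tracking the poles produced by the $\zeta(ks)$ factors and checking them against the cancellation clause of Theorem~\ref{thm:estermann_dahlquist}(i). I expect this bookkeeping to be the delicate part of the argument.
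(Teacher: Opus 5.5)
Your plan follows the paper's own proof essentially step for step: continuation and natural boundary from Theorem~\ref{thm:estermann_dahlquist}, exclusion of poles in \(\Re(s)>0\) via the pole-location clause of part~(i), and a cancellation at \(s=1\). It inherits the same gap, which is exactly the one you flagged as the main obstacle and then set aside.

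\textbf{The pole-location clause is false.} As formulated, it fails already for \(h(z)=1-z\). This \(h\) satisfies every hypothesis of Theorem~\ref{thm:estermann_dahlquist} and has no poles, yet \(\prod_p(1-p^{-s})=1/\zeta(s)\) has a pole at every nontrivial zero of \(\zeta\). As you suspected, the candidate poles of Estermann's continuation are the zeros and poles of the factors \(\zeta(ks)^{-b_k}\). For \(D_\omega\) some of them survive, so the assertion that \(D_\omega\) is analytic in \(\Re(s)>0\) is not merely unproved but false.

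Your own strip-by-strip check exposes this at once. In the first strip the paper's formulas give \(D_\omega(s)=\frac{(1-2^{1-s})\,G(s)}{(1-2^{-s})^2\,\zeta(s)}\), so the factor \(1/\zeta(s)\) is already present. This is to be expected, since \((-1)^{\omega(n)}=\mu(n)\) on squarefree \(n\). One strip further,
\[
D_\omega(s)=\frac{E_2(s)}{\zeta(s)\,\zeta(2s)},
\qquad
E_2(s)=\prod_p\frac{1-2p^{-s}}{(1-p^{-s})^2\,(1-p^{-2s})}.
\]
The local factors of \(E_2\) are \(1-2z^3-3z^4-\cdots\) at \(z=p^{-s}\), so \(E_2\) is analytic for \(\Re(s)>1/3\). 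Now take a zero \(\rho=\tfrac12+i\gamma\) of \(\zeta\). Then \(\zeta(2\rho)=\zeta(1+2i\gamma)\neq0\). Also \(E_2(\rho)\neq0\), since \(1-2p^{-\rho}=0\) would force \(p^{1/2}=2\). Hence every zero of \(\zeta\) on the critical line (infinitely many, by Hardy) is a pole of \(D_\omega\).

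What survives is the meromorphic continuation and the natural boundary. The natural boundary follows, for example, from Estermann's theorem for the non-cyclotomic polynomial \(1-2z\) applied to \(D_\omega=\zeta(s)\prod_p(1-2p^{-s})\). This works because \(\zeta\) is analytic and nonzero near every point of \(\Re(s)=0\).

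\textbf{The bookkeeping at \(s=1\) is inconsistent.} A simple pole cancelled by a simple zero leaves a finite, generically nonzero value, which does not ``match'' \(D_\omega(1)=D_\omega'(1)=0\). The missing piece is
\(\prod_{p\ge3}(1-2p^{-s})=G(s)\bigl((1-2^{-s})\zeta(s)\bigr)^{-2}\),
which has a double zero at \(s=1\); this is the function \(A\) of Lemma~\ref{lem:D_zero_order}. So the order of \(D_\omega=\zeta(s)\,(1-2^{1-s})\prod_{p\ge3}(1-2p^{-s})\) at \(s=1\) is \(-1+1+2=2\). Thus \(s=1\) is a double zero, not a singularity.

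\textbf{A smaller error.} \(G\) is not zero-free on \(\Re(s)>1/2\). Its \(p=3\) factor vanishes at \(s=(\log 2+2\pi i k)/\log 3\), whose real part is about \(0.63\). This is harmless, since these are zeros rather than poles, but the claim should be corrected.
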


\begin{proof}
	The Euler product representation
	\[
	D_{\omega}(s)=\prod_{p}h(p^{-s}),
	\qquad
	h(z)=\frac{1-2z}{1-z},
	\]
	is valid for \(\Re(s)>1\).  By Lemma~\ref{lem:h_analytic}, the
	function \(h\) satisfies the hypotheses of
	Theorem~\ref{thm:estermann_dahlquist}.  By
	Lemma~\ref{lem:h_not_cyclotomic}, \(h\) is not a product of
	cyclotomic polynomials.  Therefore, applying
	Theorem~\ref{thm:estermann_dahlquist}:
	\begin{enumerate}[label=(\roman*)]
		\item \(D_{\omega}(s)\) admits a meromorphic continuation to
		\(\Re(s)>0\).  The poles of the local factors \(h(p^{-s})\)
		occur at \(p^{-s}=1\), that is, at \(s=2\pi i k/\log p\)
		for \(k\in\mathbb{Z}\).  All these points lie on the
		boundary \(\Re(s)=0\), so there is no pole of any local
		factor in the open half-plane \(\Re(s)>0\).  The factor
		\(\zeta(s)\), which arises when the product over primes is
		rewritten, has a simple pole at \(s=1\); this pole is
		cancelled by the simple zero of \(h(2^{-s})\) at \(s=1\),
		because \(h(2^{-1})=0\).  Hence \(D_{\omega}\) is analytic
		in \(\Re(s)>0\) after removal of the point \(s=1\).
		\item The line \(\Re(s)=0\) is a natural boundary for
		\(D_{\omega}(s)\).
	\end{enumerate}
	This completes the proof.
\end{proof}

\begin{corollary}\label{cor:no_zeta_appendix}
	The derivative \(D_{\omega}'(0)\) does not exist.  Consequently,
	the product \(P_{\omega}=\prod_{n\ge1}n^{(-1)^{\omega(n)}}\) cannot
	be regularized by zeta regularization or by any method requiring
	meromorphic continuation to \(s=0\).
\end{corollary}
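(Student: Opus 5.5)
The plan is to derive the corollary directly from Theorem~\ref{thm:natural_boundary_appendix}. The first step is to make precise what ``\(D_{\omega}'(0)\) exists'' means in the context of zeta regularization. The derivative at \(s=0\) is understood as the derivative of an analytic (or at least meromorphic) continuation of \(D_{\omega}\) from the half-plane \(\Re(s)>0\), where by Theorem~\ref{thm:natural_boundary_appendix} it is analytic after removing the point \(s=1\), to some open disc \(B(0,r)\). The Dirichlet series itself does not converge at \(s=0\), and the Euler product identity does not extend there either. So the only meaningful sense of \(D_{\omega}'(0)\) is through such a continuation, and the claim reduces to showing that no continuation to any neighbourhood of \(0\) exists.

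The second step is a short contradiction argument. Suppose there were a function \(g\) meromorphic on a disc \(B(0,r)\), with \(r<1\), agreeing with \(D_{\omega}\) on \(B(0,r)\cap\{\Re(s)>0\}\). Its poles in \(B(0,r)\) are isolated, so there is a subarc \(\gamma=\{it: t_1<t<t_2\}\) of the imaginary axis inside \(B(0,r)\) that avoids them; \(g\) is then analytic on a neighbourhood of \(\gamma\). Gluing \(g\) with \(D_{\omega}\) gives an analytic continuation of \(D_{\omega}\) across \(\gamma\). This is well defined because the two functions agree on the connected overlap \(B(0,r)\cap\{\Re(s)>0\}\) by the identity theorem. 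That contradicts the natural boundary assertion of Theorem~\ref{thm:natural_boundary_appendix} (equivalently Theorem~\ref{thm:meretov}). Hence no such continuation exists, and in particular \(D_{\omega}'(0)\) is undefined.

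The final step is the ``consequently'' clause. Zeta regularization defines \(\log P_{\omega}:=-D_{\omega}'(0)\), so it is unavailable. Any method whose output is expressed through the value or derivatives at \(s=0\) of a meromorphic continuation of \(D_{\omega}\) fails for the same reason, since the previous step excludes meromorphic continuation to any neighbourhood of \(0\).

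The main obstacle is not technical but definitional: one must rule out weaker readings. For instance, one-sided real limits such as \(\lim_{\sigma\to0^+}D_{\omega}'(\sigma)\) might conceivably exist, since a natural boundary does not by itself force blow-up along a single radial path. I would address this by stating explicitly that the corollary concerns continuation to a full neighbourhood of \(s=0\), which is exactly what zeta regularization requires. The stronger, radial-limit reading is neither needed nor claimed.
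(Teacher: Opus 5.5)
Your proposal is correct and follows the paper's route: the paper treats this corollary as an immediate consequence of the natural boundary in Theorem~\ref{thm:natural_boundary_appendix}, and your gluing argument makes explicit why a natural boundary along $\Re(s)=0$ rules out any meromorphic continuation of $D_{\omega}$ to a disc about $s=0$. One caveat: the property you quote from that theorem, that $D_{\omega}$ is analytic in $\Re(s)>0$ apart from $s=1$, is false. Writing $D_{\omega}(s)=G(s)/\zeta(s)$ with $G(s)=\prod_p\bigl(1-(p^{s}-1)^{-2}\bigr)$ shows that $D_{\omega}$ has poles at the nontrivial zeros of $\zeta$, and more generally at points $\rho/m$ that accumulate at $s=0$. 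Your contradiction argument does not depend on this claim, and it goes through verbatim with $D_{\omega}$ merely meromorphic on $\Re(s)>0$.
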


\subsection{Explicit description of the accumulating zeros}
\label{subsec:accumulation_appendix}

For the reader's convenience, we describe explicitly how the zeros of
the local factors accumulate on the imaginary axis.  This illustrates
the mechanism behind Theorem~\ref{thm:estermann_dahlquist} in our
specific case, and clarifies the distinction between poles and zeros
discussed in the statement of that theorem.

Let \(z_0\in\overline{\DD}\) be a zero of the local factor \(h\).  For
\(h(z)=(1-2z)/(1-z)\), the unique zero is \(z_0=1/2\), with
\(|z_0|=1/2<1\).  The local factor \(h(p^{-s})\) vanishes when
\(p^{-s}=z_0\), that is, when
\[
-s\log p=\log z_0+2\pi i k,\qquad k\in\mathbb{Z}.
\]
Writing \(s=\sigma+it\) and using \(z_0=1/2\), this gives
\[
-\sigma\log p-it\log p=-\log 2+2\pi i k,\qquad k\in\mathbb{Z},
\]
or equivalently
\begin{equation}\label{eq:zeros_explicit}
	s_{p,k}
	=
	\frac{\log 2}{\log p}
	+
	\frac{2\pi i k}{\log p},
	\qquad
	k\in\mathbb{Z}.
\end{equation}
For each prime \(p\), the real part of \(s_{p,k}\) is
\[
\sigma_{p}=\frac{\log 2}{\log p}>0,
\]
which tends to \(0\) as \(p\to\infty\), while the imaginary parts
\(\{2\pi k/\log p:k\in\mathbb{Z}\}\) form a lattice of spacing
\(2\pi/\log p\), which also tends to \(0\) as \(p\to\infty\).

\begin{lemma}\label{lem:accumulation}
	For every \(t\in\RR\), there exists a sequence of primes
	\(p_{j}\to\infty\) and integers \(k_{j}\in\mathbb{Z}\) such that
	\[
	s_{p_{j},k_{j}}
	\longrightarrow
	it.
	\]
\end{lemma}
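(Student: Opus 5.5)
The proof is elementary and uses only the explicit formula \eqref{eq:zeros_explicit}; no analytic input is needed. Fix \(t\in\RR\). For each prime \(p\), I would pick the integer \(k\) for which \(2\pi k/\log p\) is nearest to \(t\), namely
\[
k_p=\Bigl\lfloor \frac{t\log p}{2\pi}+\tfrac12\Bigr\rfloor .
\]
Since the lattice \(\{2\pi k/\log p:k\in\mathbb{Z}\}\) has spacing \(2\pi/\log p\), this choice gives
\[
\Bigl|\frac{2\pi k_p}{\log p}-t\Bigr|\le\frac{\pi}{\log p}.
\]

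Next I would take \(p_j\) to be the \(j\)-th prime, which tends to infinity because there are infinitely many primes, and set \(k_j=k_{p_j}\). Then
\[
\bigl|s_{p_j,k_j}-it\bigr|
\le
\frac{\log 2}{\log p_j}+\frac{\pi}{\log p_j}
=
\frac{\log 2+\pi}{\log p_j}\longrightarrow 0 .
\]
This bounds the real-part deviation and the imaginary-part deviation separately and combines them with the triangle inequality. It proves the lemma, and in fact gives a quantitative rate of \(O(1/\log p_j)\).

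The only point needing care is the nearest-integer rounding bound, which is immediate. There is no genuine obstacle. If a strictly increasing sequence of distinct points is wanted, this also holds automatically: the real parts \(\log 2/\log p_j\) are strictly decreasing, so the points \(s_{p_j,k_j}\) are pairwise distinct.
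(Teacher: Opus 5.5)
Your proof is correct and is essentially the paper's argument: for each large prime \(p\) you pick \(k\) so that \(2\pi k/\log p\) lies within \(O(1/\log p)\) of \(t\), then bound \(|s_{p,k}-it|\) by the sum of the real and imaginary deviations, both \(O(1/\log p)\). The differences are cosmetic: nearest-integer rounding gives you the constant \(\pi\) where the paper has \(2\pi\), and taking \(p_j\) to be the \(j\)-th prime replaces the paper's somewhat redundant \(\varepsilon\)-bookkeeping.
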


\begin{proof}
	Fix \(t\in\RR\) and \(\varepsilon>0\).  Choose a prime
	\(p\) sufficiently large that
	\begin{equation}\label{eq:small_real_part}
		\frac{\log 2}{\log p}<\varepsilon.
	\end{equation}
	Choose an integer \(k\) such that
	\[
	\left|\frac{2\pi k}{\log p}-t\right|
	<
	\frac{2\pi}{\log p}.
	\]
	Such an integer exists because the points
	\(2\pi k/\log p\), \(k\in\mathbb{Z}\), form a lattice of spacing
	\(2\pi/\log p\).  Then
	\[
	|s_{p,k}-it|
	\le
	\frac{\log 2}{\log p}
	+
	\frac{2\pi}{\log p}
	\longrightarrow 0
	\]
	as \(p\to\infty\).  Taking a sequence \(p_{j}\to\infty\) and
	adjusting \(k_{j}\) accordingly gives the result.
\end{proof}

Thus the zeros of the local factors \(h(p^{-s})\) are dense in the
imaginary axis.  They are zeros of \(D_\omega\), not poles; but their
accumulation on the boundary \(\Re(s)=0\) prevents the existence of a
meromorphic continuation across that line.  Indeed, if \(D_\omega\)
extended analytically across some arc of \(\Re(s)=0\), then in a
neighbourhood of any limit point \(it\) of the zeros \(s_{p,k}\) the
extension would have to vanish identically, by the identity theorem
for holomorphic functions; but \(D_\omega\) is not identically zero
(for instance, \(D_\omega(2)=\prod_p(1-2p^{-2})/(1-p^{-2})\ne 0\)).
This contradiction establishes the natural boundary.

\begin{remark}[Poles versus zeros]
	It is important to distinguish two types of special points of
	\(D_\omega\) in the half-plane \(\Re(s)>0\):
	\begin{enumerate}[label=(\roman*)]
		\item \emph{Poles.}  These arise from poles of the local
		factors \(h(p^{-s})\).  For \(h(z)=(1-2z)/(1-z)\), the
		only pole is at \(z=1\), giving \(p^{-s}=1\), i.e.\
		\(s=2\pi i k/\log p\), \(k\in\mathbb{Z}\).  All these
		points lie on the imaginary axis \(\Re(s)=0\), except for
		\(k=0\), which gives \(s=0\).  Hence there are \emph{no}
		poles of \(D_\omega\) in the open half-plane
		\(\Re(s)>0\).
		\item \emph{Zeros.}  These arise from zeros of the local
		factors \(h(p^{-s})\) and are given by
		\eqref{eq:zeros_explicit}.  They form a dense subset of
		the imaginary axis as \(p\to\infty\), with real parts
		tending to \(0\).
	\end{enumerate}
	A zero of a meromorphic function is not a singularity: at a
	zero, the function is analytic and vanishes.  The natural
	boundary of \(D_\omega\) at \(\Re(s)=0\) is thus produced not by
	\emph{singularities} accumulating on the line, but by
	\emph{zeros} accumulating on it, which obstruct analytic
	continuation by the identity theorem.
\end{remark}

\begin{remark}[Why the naive argument fails]
	A naive argument would attempt to show the natural boundary by
	examining the expansion
	\[
	D_{\omega}(s)=\prod_{m\ge1}\zeta(ms)^{-c_{m}},
	\qquad
	c_{m}=\frac{1}{m}\sum_{d\mid m}\mu\!\left(\frac{m}{d}\right)(2^{d}-1),
	\]
	and looking for poles at \(s=1/m\).  This fails for two reasons:
	\begin{enumerate}[label=(\arabic*)]
		\item The factor \(\zeta(ms)^{-c_{m}}\) has a \emph{zero} at
		\(s=1/m\) (when \(c_{m}>0\)), not a pole, because
		\(\zeta(ms)\) has a pole there.
		\item The points \(1/m\) accumulate only at \(s=0\), not along
		the entire imaginary axis.
	\end{enumerate}
	The correct special points to examine are the zeros of the local
	factors \(h(p^{-s})\), given by \eqref{eq:zeros_explicit}, which
	do accumulate on the entire imaginary axis.
\end{remark}

\subsection{Relation to the theorem of Meretov}

The result stated in Theorem~\ref{thm:natural_boundary_appendix} is
exactly the content of Meretov's 1987 paper \cite{Meretov1987}.  The
Estermann--Dahlquist theory provides a general framework in which this
result is a special case: the local factor
\(h(z)=(1-2z)/(1-z)\) is a rational function with a non-cyclotomic
zero at \(z=1/2\), and the general theory guarantees that such a zero
produces a natural boundary.

We emphasize that the original proof of Meretov is not accessible to us
in digital form, and we have therefore chosen to present the argument
via the Estermann--Dahlquist theorem, which is a well-established
classical result with a clear and citable proof in the literature.  The
reader interested in the original 1987 proof may consult
\cite{Meretov1987} directly.



\end{document}